\newtheorem{theorem}{Theorem}[section]
\newtheorem{corollary}[theorem]{Corollary}
\newtheorem{proposition}[theorem]{Proposition}
\newtheorem{lemma}[theorem]{Lemma}
\newtheorem{conjecture}[theorem]{Conjecture}
\theoremstyle{definition}
\newtheorem{remark}[theorem]{Remark}
\newtheorem{definition}[theorem]{Definition}
\newcommand{\CC}{{\mathbb C}}
\newcommand{\RR}{{\mathbb R}}
\begin{document}

\title{Some optimal embeddings of symplectic ellipsoids}

\author{R. Hind}

\date{\today}

\maketitle

\begin{abstract}
We construct symplectic embeddings in dimension $2n \ge 6$ of ellipsoids into the product of a $4$-ball or $4$-dimensional cube with Euclidean space. A sequence of the embeddings are seen to be optimal by a quantitative version of the embedding obstructions introduced in \cite{hindker}.

In the limiting case when our ellipsoids approach a cylinder we recover an embedding of Guth, \cite{guth}. However for compact ellipsoids our embedding gives sharper results. At the other end of the scale, for certain convergent sequences of ellipsoids we reproduce estimates given by stabilizing $4$-dimensional embeddings of McDuff and Schlenk, \cite{mcdsch}, in the ball case and  Frenkel and M\"{u}ller, \cite{frmu}, in the cube case.

\end{abstract}

\begin{section}{Introduction}

Recently there has been much progress on the problem of symplectic embeddings, particularly in dimension $4$. McDuff and Schlenk in \cite{mcdsch} completely classified ellipsoid embeddings into balls, and Frenkel and M\"{u}ller in \cite{frmu} classified ellipsoid embeddings into cubes. Meanwhile Hutchings has developed the Embedded Contact Homoloy capacities, see \cite{hutchings}, which provide a complete set of invariants in the cases mentioned above, see \cite{mcd2}, \cite{hutchings2}, together with many others, see \cite{choi}.

In higher dimensions much less is known. In the present paper we construct a new embedding of ellipsoids using a version of multiple symplectic folding, see \cite{schl}. Furthermore, results from \cite{hindker} show that a sequence of such embeddings are optimal. This is slightly surprising since in dimension $4$, at least in the case of ellipsoid embeddings into a ball, we see from \cite{mcdsch} and \cite{schl} that folding never gives optimal embeddings (except in the trivial case when the inclusion is optimal).

We can compare our results with known constructions.
In the limiting case when the ellipsoids approach a cylinder (becoming infinitely thin) we recover an optimal embedding result of Guth, \cite{guth}, which is related to the nonexistence of higher order embedding capacities, but in the compact case our embedding gives strictly sharper results, see section \ref{1pt1} for a discussion of this. This is the case at least for ellipsoids, for polydisks our construction does not seem to give anything new.

On the other hand, for certain convergent sequences of rounder ellipsoids our embedding gives the same result as stabilizing $4$-dimensional embeddings of McDuff and Schlenk, \cite{mcdsch}, and  Frenkel and M\"{u}ller, \cite{frmu}. This is discussed in section \ref{capacities}. We emphasize that even though our embeddings and these stabilizations have the same domain and range, the embeddings themselves are very different. The McDuff--Schlenk and  Frenkel--M\"{u}ller embeddings are constructed indirectly using holomorphic curves and Seiberg--Witten theory. The embeddings constructed here are very concrete.

Before stating our results we fix some definitions.
We study symplectic embeddings into Euclidean space $\RR^{2n}$, with coordinates $x_j,y_j$, $1 \le j \le n$, equipped with its standard symplectic form $\omega = \sum_{j=1}^n dx_j \wedge dy_j$. Often it is convenient to identify $\RR^{2n}$ with $\CC^n$ by setting $z_j = x_j + i y_j$. The basic domains for symplectic embedding problems are ellipsoids $E$ and polydisks $P$ which we define as follows.

$$E(a_1, \dots ,a_n) = \{\sum_j \frac{\pi |z_j|^2}{a_j} \le 1\};$$
$$P(a_1, \dots ,a_n) = \{\pi |z_j|^2 \le a_j \, \mathrm{for \, all} \, j\}.$$

These are subsets of $\CC^n$ and so inherit the symplectic structure. A ball of capacity $R$ is simply an ellipsoid $B^{2n}(R) = E(R, \dots ,R)$. A disk of area $a$ will sometimes be written $D(a) = B^2(a)$. It is also convenient to write $\lambda E(a_1, \dots ,a_n)$ for $E(\lambda a_1, \dots ,\lambda a_n)$.

The following notation will be useful.

\begin{definition} \[E(a_1, \dots, a_n) \hookrightarrow E(b_1, \dots, b_n)\] will mean that for all $0 < \epsilon <1$ there exists a symplectic embedding $(1-\epsilon) E(a_1, \dots, a_n) \rightarrow E(b_1, \dots, b_n)$.
\end{definition}

\begin{remark} It is of course true that if there exists a symplectic embedding of the interior $\mathring{E}(a_1, \dots, a_n) \rightarrow E(b_1, \dots, b_n)$ then we can also write $E(a_1, \dots, a_n) \hookrightarrow E(b_1, \dots, b_n)$. In dimension $4$, that is when $n=2$, the converse is also true. This follows from a theorem of McDuff, \cite{mcduff} Corollary $1.6$, saying that the space of ellipsoid embeddings into an ellipsoid is path connected. It is unknown if the converse is true in higher dimensions.
However we can get an embedding of the interior in any dimension if there exists a family of symplectic embeddings $(1-\epsilon) E(a_1, \dots, a_n) \rightarrow E(b_1, \dots, b_n)$ depending smoothly on $\epsilon$. This is a consequence of a result of Pelayo and Ng\d{o}c, \cite{pelngo1}, Theorem $4.1$. It is likely that the main construction in the present paper can be carried out smoothly in a family, but we do not discuss that here.
\end{remark}

Given the notation above, we can state our main theorem as follows.

\begin{theorem} \label{main} Let $n \ge 3$ and $a_2 \ge 1$. Then $$E(1,a_2, \dots, a_n) \hookrightarrow B^4(\frac{3a_2}{a_2+1}) \times \CC^{n-2}.$$
Moreover, if $a_2 = 3d-1$ for a positive integer $d$ and $a_3, \dots ,a_n \ge a_2$ then the embedding is optimal, that is, if $R<\frac{3a_2}{a_2+1}$ then there are no symplectic embeddings $E(1,a_2, \dots, a_n) \rightarrow B^4(R) \times \CC^{n-2}$.
\end{theorem}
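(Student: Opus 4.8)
The plan is to treat the two assertions by completely different methods: the embedding is an explicit folding construction, while optimality is a holomorphic-curve obstruction.

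\emph{The embedding.} First I would reduce the dimension. As subsets of $\CC^n$ one has $(1-\epsilon)E(1,a_2,\dots,a_n)\subset (1-\epsilon)E(1,a_2)\times\CC^{n-2}$, the inclusion being symplectic, and $(1-\epsilon)E(1,a_2)\times\CC^{n-2}=\big((1-\epsilon)E(1,a_2)\times\CC\big)\times\CC^{n-3}$. So it suffices to produce, for each $\epsilon>0$, a symplectic embedding
\[(1-\epsilon)\,E(1,a_2)\times\CC\ \longrightarrow\ B^4\!\big(\tfrac{3a_2}{a_2+1}\big)\times\CC,\]
take its product with the identity of $\CC^{n-3}$, and restrict to the ellipsoid; the construction is thus genuinely six-dimensional, with the final $\CC$-factor providing the room into which one folds. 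I would build this embedding by multiple symplectic folding in the sense of \cite{schl}. In the moment picture $E(1,a_2)$ is the long thin triangle with vertices $(0,0),(1,0),(0,a_2)$ and $B^4(R)$ the triangle $(0,0),(R,0),(0,R)$; I would cut the long triangle into slabs along level sets of the second moment coordinate — $d$ slabs when $a_2=3d-1$ — and fold them accordion-fashion, using the $\CC$-coordinate to keep successive layers disjoint, with each hinge region costing a controlled fraction of a unit of capacity. The bookkeeping should be arranged so that the folded image fits inside the triangle of $B^4(R)$ with $R$ as small as $\tfrac{3a_2}{a_2+1}$; the cases $a_2=2$ (one slab, no fold, just $E(1,2)\subset B^4(2)$) and $a_2\to\infty$ (infinitely many folds, recovering Guth's cylinder embedding with constant $3$) are useful consistency checks. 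I expect no conceptual difficulty here, only the careful accounting — showing the hinges do not overlap and that the telescoping cost is exactly $1/d$ — of the kind carried out in \cite{schl}.

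\emph{Optimality.} Here I would argue by contradiction using a quantitative sharpening of the obstruction of \cite{hindker}. Suppose $a_2=3d-1$, $a_3,\dots,a_n\ge a_2$, and $\phi\colon E(1,a_2,\dots,a_n)\to B^4(R)\times\CC^{n-2}$ is a symplectic embedding with $R<\tfrac{3a_2}{a_2+1}=\tfrac{3d-1}{d}$. Compactify the target: $B^4(R)$ embeds symplectically into $\mathbb{CP}^2$ with lines of area $R$, and a large portion of $\CC^{n-2}$ into a product of large-area copies of $\mathbb{CP}^1$, so $\phi$ gives an embedding of the ellipsoid into $\mathbb{CP}^2(R)\times(\text{closed manifold})$ for which the curves of interest stay away from the added divisor. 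Choose a compatible almost complex structure that is standard near $\phi(E)$ and stretched along $\partial\phi(E)$, and consider the moduli space of holomorphic spheres of degree $d$ in the $\mathbb{CP}^2(R)$-factor through a collection of point constraints making the expected dimension zero; a cobordism argument from an unstretched or split model structure — this is the content of \cite{hindker}, made quantitative — shows the moduli space is non-empty. Letting the neck length tend to infinity, SFT compactness degenerates a sequence of these spheres to a holomorphic building, one of whose bottom components is a finite-energy plane (or low-genus curve) in the completion of $\phi(E)\cong E(1,a_2,\dots,a_n)$, asymptotic to iterates of the simple Reeb orbits $\gamma_1,\dots,\gamma_n$ on the ellipsoid boundary, of actions $1,a_2,\dots,a_n$. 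Since the filling is exact, the area of this component equals the total action of its asymptotic orbits, and since every component of the building has non-negative area, this is at most the area $dR$ of the original degree-$d$ sphere. An index count — this is exactly where $a_2=3d-1$ and $a_3,\dots,a_n\ge a_2$ enter, pinning down the admissible orbits and multiplicities (note $\gamma_1^{\,3d-1}$ and $\gamma_2$ both have action $3d-1$, while $\gamma_3,\dots,\gamma_n$ have action $\ge a_2$) — forces this component's total asymptotic action to be at least $a_2=3d-1$. Hence $dR\ge 3d-1$, i.e. $R\ge\tfrac{3a_2}{a_2+1}$, a contradiction.

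The main obstacle is the analysis of this moduli space: choosing the degree and point constraints so the expected dimension is zero, verifying the count is non-zero by comparison with a model (the neck-stretching and transversality input of \cite{hindker}), and — the genuinely quantitative part — showing that \emph{every} degeneration compatible with the index constraints has a bottom component of asymptotic action $\ge a_2$, rather than only the cruder cylinder bound $3$. Tracking the exact actions of $\gamma_1^{\,3d-1}$ and $\gamma_2$, and ruling out escape of energy into the flat $\CC^{n-2}$-directions, which is what forces the hypothesis $a_3,\dots,a_n\ge a_2$, is the delicate point.
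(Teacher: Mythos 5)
Your optimality argument is essentially the paper's: after compactifying and stretching the neck you appeal to the Hind--Kerman existence result for a suitable finite-energy curve attached to the ellipsoid boundary and convert it into the inequality $dR\ge 3d-1$ by an action/area (Stokes) computation. The paper does exactly this, quoting \cite{hindker} (Propositions $3.4$ and $3.14$), which produce a finite energy plane of degree $d$ in the complement asymptotic to $(3d-1)\gamma$, so that positivity of area gives $dR-(3d-1)>0$. As a sketch deferring the hard moduli-space input to \cite{hindker}, that half is fine.

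The embedding half has a genuine gap, located in your first reduction. You enlarge the domain to $(1-\epsilon)E(1,a_2)\times\CC$ and propose to fold it, ``using the $\CC$-coordinate to keep successive layers disjoint.'' But once the domain is the full cylinder, every fiber already occupies the entire last $\CC$-factor, so there is no room left in that coordinate to separate folded layers; the statement you reduce to is strictly stronger than the theorem, is not proved in this paper, and is precisely the limiting (cylinder) statement the paper avoids claiming -- it recovers Guth's embedding only on compact subsets. The paper's reduction goes the other way: it keeps the third axis bounded, reducing via monotonicity and a product with the identity in $z_4,\dots,z_n$ to the compact ellipsoid $E(S,1,T)$ with $T\ge\max_j a_j$ finite, and the finiteness of $T$ is used essentially: each folded layer's third-coordinate fiber is a disk of area $T$, which Hamiltonian moves push into pairwise disjoint annuli $A_i$ of the target's free $\CC$-factor, while the area-one direction alternates between two disjoint disks $D_1,D_2$ of area $\lambda=S/(S+1)$. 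Relatedly, your accounting ($d$ slabs, hinge costs telescoping to $1/d$) is not the mechanism producing $\frac{3a_2}{a_2+1}=3\lambda$: the construction uses $N\sim S/\epsilon$ folds, and the ball bound comes from the pointwise balance $\pi|z_1|^2+\pi|z_2|^2\le(1+t)\lambda+(2-t)\lambda=3\lambda$ between where a layer lands in the $z_1$-plane and the size of its $z_2$-projection (Lemma \ref{image} together with the immersion $\tau$ of \cite{schl}, Lemma $3.1.5$). To repair your argument, do not pass to the cylinder: fold $E(1,a_2,T)$ for finite $T$ and exploit the bounded third axis of the domain to achieve the layer separation.
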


An analogous result in dimension $4$ would discuss symplectic embeddings of an ellipsoid into a $4$-ball, and this problem has been completely solved by McDuff and Schlenk in \cite{mcdsch}. Relations to the $4$-dimensional situation are discussed in section \ref{capacities}. It turns out that the first part of Theorem \ref{main}, the existence of an embedding, is also true in dimension $4$ provided $a_2 \le \tau^4 = \frac{7+3\sqrt{5}}{2}$ but is false when $a_2>\tau^4$. Here $\tau$ here is the golden ratio. The embedding is optimal when $n=2$ provided $a_2=\frac{g_{n+2}}{g_N}$, a ratio of odd Fibonacci numbers (see again section \ref{capacities} for details). This includes the cases $a_2=3d-1$ and $d=1,2$. For $d=3$ we have $E(1,8) \hookrightarrow B^4(R)$ if and only if $R \ge \frac{17}{6}$ and for $a \ge 8\frac{1}{36}$ we have $E(1,a) \hookrightarrow B^4(R)$ if and only if $R \ge \sqrt{a}$.

There is a similar statement for embeddings into products of a bidisk and Euclidean space.

\begin{theorem} \label{main2} Let $n \ge 3$ and $a_2 \ge 1$. Then $$E(1,a_2, \dots, a_n) \hookrightarrow P(\frac{2a_2}{a_2+1}, \frac{2a_2}{a_2+1}) \times \CC^{n-2}.$$
Moreover, if $a_2 = 2d+1$ for a positive integer $d$ and $a_3, \dots ,a_n \ge a_2$ then the embedding is optimal, that is, if $R<\frac{2a_2}{a_2+1}$ then there is no symplectic embedding $E(1,a_2, \dots, a_n) \rightarrow P(R,R) \times \CC^{n-2}$.
\end{theorem}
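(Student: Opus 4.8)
The plan is to follow the same strategy as for Theorem \ref{main}, adapting the folding construction from the ball target $B^4$ to the cube target $P(c,c)$ with $c = \frac{2a_2}{a_2+1}$. For the existence half, I would first reduce the problem to a $4$-dimensional embedding question with an extra parameter by the standard trick of "stabilizing": an embedding $(1-\epsilon)E(1,a_2,\dots,a_n)\to P(c,c)\times\CC^{n-2}$ can be built from a suitably flexible $4$-dimensional construction together with room to fold in the extra $\CC$-directions. Concretely, I would decompose $(1-\epsilon)E(1,a_2)$ along the lines of multiple symplectic folding (cf. \cite{schl}), folding the long $a_2$-direction repeatedly over a region of $2$-area roughly $\frac{2a_2}{a_2+1}$, and use the $n-2$ additional complex coordinates to accommodate the folds without overlap — this is exactly why the phenomenon is dimension-sensitive and works in $2n\ge 6$ but fails for large $a_2$ when $n=2$. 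The number of folds needed is governed by the integer $d$ with $a_2 = 2d+1$, which is why the clean optimality statement appears at those values.

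For the optimality half, the key input is the quantitative embedding obstruction from \cite{hindker}. I would invoke that machinery to produce, for each target $P(R,R)\times\CC^{n-2}$ with $R < \frac{2a_2}{a_2+1}$, a pseudoholomorphic curve (or family thereof) in a suitable compactification/symplectization whose existence is forced by a nonvanishing Gromov–Witten-type invariant, and whose area/index bookkeeping yields the inequality $R \ge \frac{2a_2}{a_2+1}$, a contradiction. The role of the hypothesis $a_2 = 2d+1$ is that for these values the obstruction curve has the exact degree and intersection pattern making the area estimate sharp; for general $a_2$ the same curves give a weaker (non-matching) bound. The hypothesis $a_3,\dots,a_n\ge a_2$ guarantees that restricting to the first two factors loses no information — the obstruction already lives in the $E(1,a_2)$ sub-ellipsoid — so the stabilized obstruction is genuinely $4$-dimensional in origin, paralleling the remark that the $n=2$ sharp cases $a_2 = \frac{g_{n+2}}{g_N}$ degenerate here to $a_2 = 2d+1$ (note the cube analogue of the McDuff–Schlenk staircase is the Frenkel–Müller staircase of \cite{frmu}, whose relevant steps sit at these half-integers).

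The main obstacle I anticipate is the existence construction, specifically controlling the folding map near the corners of the cube $P(c,c)$: unlike the ball $B^4$, the cube has a distinguished product structure and nonsmooth boundary, so the folded image must be arranged to respect both $\pi|z_1|^2 \le c$ and $\pi|z_2|^2 \le c$ simultaneously, which constrains the shape of each fold more tightly than in the ball case. I would handle this by working with the "L-shaped" or staircase region that is the moment-map image of (a perturbation of) $P(c,c)$ and checking that the successive folds of the triangle representing $E(1,a_2)$ fit inside it — this is essentially a combinatorial/area-accounting lemma, but making it precise in coordinates, with the requisite smoothing near corners and the explicit symplectic isotopies, is where the real work lies. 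A secondary technical point is verifying that the obstruction of \cite{hindker}, originally phrased for ball targets, transfers to cube targets; I expect this to go through because the relevant neck-stretching and curve-counting arguments are local near the boundary of the domain ellipsoid, not the target, but it requires a careful restatement.
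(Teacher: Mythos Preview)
Your overall architecture---a folding construction for existence and the Hind--Kerman finite-energy curve obstruction for optimality---matches the paper, but several specifics are off in ways that matter.

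For existence, the paper does \emph{not} build a separate construction for the cube and then worry about corners. A single 6-dimensional folding produces an embedding whose image lies in $\bigl(B^4(3\lambda)\cap P(2\lambda,2\lambda)\bigr)\times\CC$ with $\lambda=\frac{a_2}{a_2+1}$, so Theorems \ref{main} and \ref{main2} are established simultaneously and your anticipated ``main obstacle'' near the cube's corners simply does not arise. More importantly, your claim that the number of folds is governed by the integer $d$ with $a_2=2d+1$ is wrong: the number $N$ of displacement steps is of order $a_2/\epsilon$ and has nothing to do with $d$. The role of the extra $\CC$-factor is not merely to ``accommodate folds without overlap'' in a passive way; the construction actively uses the $z_3$-coordinate to move successive bidisks $P_i$ into disjoint annuli $A_i\subset\CC_{z_3}$ (Lemma \ref{polymove}), and it is this displacement---with Hofer norm bounded by $\lambda$---that determines the size of the regions $W_i$ in the $z_1$-plane which are then folded by an immersion $\tau$. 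Your moment-map/L-shaped picture is not how the paper proceeds.

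For optimality, your sketch is closer. The paper compactifies $P(R,R)$ to $S^2(R)\times S^2(R)$ and invokes a finite energy plane in $Y=\bigl(S^2(R)\times S^2(R)\times\CC^{n-2}\bigr)\setminus E$ of bidegree $(d,1)$ asymptotic to the $(2d{+}1)$-fold cover of the short Reeb orbit $\gamma$ (Theorem \ref{fep2}); Stokes then gives $dR+R>2d+1$, i.e.\ $R>\frac{2a_2}{a_2+1}$. The hypothesis $a_3,\dots,a_n\ge a_2$ is needed not because ``restricting to the first two factors loses no information'' but because the existence of that plane requires $S_j\ge 2d+1$ for all $j$. Your concern about transferring the ball-target obstruction to cubes is legitimate, and indeed this is stated as a separate theorem rather than a corollary; but the mechanism (curve of a specific bidegree, area positivity) is exactly what you guessed.
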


Now, by work of Frenkel and M\"{u}ller, \cite{frmu}, we see that the embedding result is true also in dimension $4$ (giving embeddings into a cube) provided $a_2 \le \sigma^2 = 3+2\sqrt{2}$, but is false for $a_2 > \sigma^2$. Here $\sigma$ is the silver ratio. The embedding is optimal if $a_2$ is a certain ratio Pell numbers which includes the cases $a_2=2d+1$ for $d=1,2$. This is discussed in section \ref{capacities}.

{\bf Outline of the paper.}

The embeddings claimed in Theorems \ref{main} and \ref{main2} are constructed in section \ref{constr}, and we describe the obstructions which imply sharpness in section \ref{obstr}. However before this we discuss relations to earlier work. In section \ref{1pt1} we show why our embedding improves estimates coming from Guth's construction. In section \ref{capacities} we describe some intriguing similarities with optimal $4$-dimensional embeddings.

\begin{subsection}{Higher order capacities and Guth's embedding.}\label{1pt1}

The study of symplectic embeddings was initiated in Gromov's seminal paper \cite{gromov}, and in particular by his nonsqueezing theorem.

\begin{theorem}\label{ns} (Gromov \cite{gromov} Corollary $0.3.A$) Suppose $a_2, \dots ,a_n \ge a_1$ and $b_2, \dots ,b_n \ge b_1$. Then $$E(a_1, \dots, a_n) \hookrightarrow E(b_1, \dots, b_n)$$ only if $a_1 \le b_1$.
\end{theorem}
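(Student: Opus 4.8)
The plan is to reduce the assertion to the classical form of Gromov's nonsqueezing theorem --- the nonexistence of a symplectic embedding of a large ball into a thin cylinder --- by two elementary monotonicity observations, and then to recall the pseudoholomorphic curve argument behind the cylinder case.

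First I would record two nested inclusions coming directly from the hypotheses. Since $a_j \ge a_1$ for every $j$, we have $\sum_j \pi|z_j|^2/a_j \le \sum_j \pi|z_j|^2/a_1$, so the ball $B^{2n}(a_1) = \{\sum_j \pi|z_j|^2 \le a_1\}$ is contained in $E(a_1,\dots,a_n)$. On the other hand, discarding all but the first term in the defining sum, $E(b_1,\dots,b_n) \subseteq \{\pi|z_1|^2 \le b_1\} = B^2(b_1)\times\CC^{n-1} =: Z(b_1)$, the symplectic cylinder over a disk of area $b_1$. Hence, for each $0<\epsilon<1$, a symplectic embedding $(1-\epsilon)E(a_1,\dots,a_n)\to E(b_1,\dots,b_n)$ as in the definition of $\hookrightarrow$ restricts to a symplectic embedding $B^{2n}((1-\epsilon)a_1)\to Z(b_1)$. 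It therefore suffices to prove that a symplectic embedding $\phi\colon B^{2n}(r)\to Z(R)$ forces $r\le R$: applying this with $r=(1-\epsilon)a_1$ and letting $\epsilon\to 0$ yields $a_1\le b_1$, which is the contrapositive of the claim.

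For the cylinder statement I would follow Gromov. After a harmless rescaling one may assume $\phi(B^{2n}(r))\subseteq B^2(R')\times Q$ with $R'$ slightly larger than $R$ and $Q$ a large polydisk in $\CC^{n-1}$; compactifying $Q$ to a torus produces a closed symplectic manifold $(S^2(R')\times T^{2n-2},\omega')$ into which $B^{2n}(r)$ still symplectically embeds via $\phi$. Choose an $\omega'$-compatible almost complex structure $J$ which agrees with $\phi_*(i)$ on $\phi(B^{2n}(r))$; by the standard existence theory for $J$-holomorphic curves in a homology class of minimal symplectic area (the class $[S^2\times\mathrm{pt}]$ admits no bubbling, since $\omega'$ evaluates positively on every $J$-sphere and there are no $J$-spheres in the torus factor), there is a $J$-holomorphic sphere $C$ in this class through the center $\phi(0)$. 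Its symplectic area is $\omega'(C)=R'$, while $\phi^{-1}(C)$ is a proper minimal surface in $B^{2n}(r)$ through the origin, so the monotonicity inequality for minimal surfaces bounds its area below by $r$; combining, $r\le R'$, and $R'\to R$ completes the proof.

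The main obstacle is exactly the production of the holomorphic sphere: one needs nonemptiness and compactness of the relevant moduli space and the fact that some curve in the class passes through the prescribed center. This is Gromov's theorem; the only extra point here is that $J$ is prescribed on the image of the ball, which is innocuous since the needed transversality and compactness hold for generic compatible $J$ and such $J$ can be taken to equal $\phi_*(i)$ on a compact set. As the statement is quoted from \cite{gromov}, in the paper I would simply cite it, but the above is the argument behind it.
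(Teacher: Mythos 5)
The paper does not prove this statement at all---it is quoted from Gromov with a citation---and your treatment agrees with that, while your added reduction is correct: the hypotheses give $B^{2n}(a_1)\subseteq E(a_1,\dots,a_n)$ and $E(b_1,\dots,b_n)\subseteq B^2(b_1)\times\CC^{n-1}$, so the $\hookrightarrow$ hypothesis yields embeddings $B^{2n}((1-\epsilon)a_1)\to Z(b_1)$ and classical nonsqueezing gives $a_1\le b_1$ in the limit $\epsilon\to 0$. Your sketch of the cylinder case is the standard Gromov argument; the only imprecise point is the claim that one can have a generic compatible $J$ while prescribing $J=\phi_*(i)$ on the image of the ball---generically perturbing off a fixed open set is not automatic, and the usual fix is either to note that $[S^2\times\mathrm{pt}]$ is an indecomposable spherical class in $S^2\times T^{2n-2}$ (since $\pi_2$ of the torus factor vanishes), so existence of a sphere through a given point holds for every compatible $J$, or to take generic $J_k\to J$ and pass to a Gromov limit, which cannot bubble for the same indecomposability reason. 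With that standard adjustment the argument is complete, and since the theorem is cited rather than reproved in the paper, nothing more is required.
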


We can replace either or both of the ellipsoids in Theorem \ref{ns} by polydisks and the statement still holds.

Several years later, Hofer asked in \cite{hofer} whether the size of the second factor similarly influences symplectic embeddings. In particular he asked the following.

{\bf Question.} (Hofer, \cite{hofer}, page $17$) Does there exist a symplectic embedding $$D(1) \times \CC^{n-1} := E(1, \infty, \dots, \infty) \hookrightarrow E(R,R,\infty \dots, \infty):=B^4(R) \times \CC^{n-2}$$ for any $R$?

A positive answer implies that a sort of infinite squeezing in the second factor is possible, and so a negative answer was expected. It was surprising then when an ingenious construction of Guth \cite{guth} produced embeddings at least of compact subsets of the domain ellipsoid. Guth's construction has since been quantified by Hind and Kerman in \cite{hindker} and then extended to the whole interior by Pelayo and Ng\d{o}c. The conclusion is as follows.

\begin{theorem} \label{emb1}(Pelayo-Ng\d{o}c, \cite{pelngo2} Theorem $1.2$, \cite{pelngo1} Theorem $3.3$) There exist symplectic embeddings $$\mathring{D}(1) \times \CC^{n-1} \rightarrow B^4(3) \times \CC^{n-2}$$ and $$\mathring{D}(1) \times \CC^{n-1} \rightarrow P(2,2) \times \CC^{n-2}.$$
\end{theorem}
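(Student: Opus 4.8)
The statement to prove is Theorem \ref{emb1}, which asserts the existence of symplectic embeddings $\mathring{D}(1) \times \CC^{n-1} \rightarrow B^4(3) \times \CC^{n-2}$ and $\mathring{D}(1) \times \CC^{n-1} \rightarrow P(2,2) \times \CC^{n-2}$. Since this is attributed to Pelayo--Ng\d{o}c building on Guth and on Hind--Kerman, the plan is to reconstruct the argument in two stages: first obtain embeddings of all sufficiently large \emph{compact} pieces of the domain with a uniform target, then upgrade to an embedding of the full open domain by a limiting/exhaustion argument.

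\emph{Stage 1 (compact pieces, Guth / Hind--Kerman).} The first task is to show that for every $\lambda > 1$ there is a symplectic embedding of the truncated cylinder $D(1) \times B^{2(n-1)}(\lambda)$ into $B^4(3) \times \CC^{n-2}$, and similarly $D(1)\times B^{2(n-1)}(\lambda)$ into $P(2,2)\times\CC^{n-2}$; in fact it is cleaner to phrase the input as an embedding of a large ellipsoid $E(1, S, \dots, S) \hookrightarrow B^4(3)\times\CC^{n-2}$ for all $S$, since $D(1)\times\CC^{n-1}$ is the increasing union of such ellipsoids. I would invoke here the quantitative form of Guth's folding construction from \cite{hindker}: one realizes the ellipsoid as a disjoint union of "slabs," folds in the extra $\CC^{n-2}$ directions to gain room in the first two coordinates, and tracks the cost precisely so that the first two coordinates stay inside $B^4(3)$ (resp. $P(2,2)$) independent of $S$. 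The constants $3$ and $2$ are exactly what the folding bookkeeping produces in the cylindrical limit. I will take this compact-exhaustion statement as essentially given by \cite{hindker}, since the present paper is about \emph{improving} these constants for compact ellipsoids, not re-deriving them.

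\emph{Stage 2 (from compact to open, Pelayo--Ng\d{o}c).} The nontrivial upgrade is to pass from "every compact subset embeds, with a common target" to "the whole open domain embeds." Here I would use the fact that the embeddings from Stage 1 can be chosen to depend continuously (indeed smoothly) on the truncation parameter $\lambda$, so that one gets a smooth isotopy of symplectic embeddings of the exhausting family; Theorem 4.1 of \cite{pelngo1} then lets one integrate this family into a single symplectic embedding of the open manifold $\mathring{D}(1)\times\CC^{n-1}$, with image landing in $B^4(3)\times\CC^{n-2}$ (resp. $P(2,2)\times\CC^{n-2}$). Concretely, one writes $\mathring D(1)\times\CC^{n-1}=\bigcup_k K_k$ with $K_k$ compact and nested, applies Stage 1 to get $\psi_k: K_k \to B^4(3)\times\CC^{n-2}$ in a smooth family, and uses the Pelayo--Ng\d{o}c gluing/limit argument to produce $\psi = \lim \psi_k$ as a genuine symplectic embedding.

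\emph{Main obstacle.} The delicate point is Stage 2: a naive pointwise limit of symplectic embeddings need not be an embedding (injectivity and openness can fail in the limit), so the whole force of the argument is in arranging the Stage 1 embeddings in a controlled family to which \cite{pelngo1} Theorem 4.1 applies — i.e. verifying the smooth-dependence and uniform-image hypotheses needed for that theorem. The $P(2,2)$ case is handled identically to the $B^4(3)$ case, since the folding construction and the limiting argument are insensitive to whether the $4$-dimensional target is a ball or a cube; only the numerical output ($3$ versus $2$) changes.
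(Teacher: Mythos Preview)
Your two-stage outline is correct and matches the paper's overall logic, but your Stage~1 sketch diverges from the paper's recalled construction. The paper, following Guth, factors the compact-subset embedding through a once-punctured torus $\Sigma(\delta)$ of small area: first embed $B^{2(n-1)}(S) \to \Sigma(\delta)\times\CC^{n-2}$ (Lemma~\ref{glem1}), then embed $D(1-\epsilon)\times\Sigma(\delta) \to B^4(3)$ or $P(2,2)$ (Lemma~\ref{glem2}), and take the product composition $(\phi_2\times\mathrm{id})\circ(\mathrm{id}\times\phi_1)$. Your description in terms of ``slabs'' and folding in the extra $\CC^{n-2}$ directions is closer in spirit to the paper's own construction in Section~\ref{constr} than to Guth's; the punctured-torus intermediary is the conceptual heart of Guth's argument, and the paper singles it out precisely because it later observes that any embedding factoring through $\phi_1$ is stuck at the constants $3$ and $2$. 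Either route produces the required compact-piece embeddings, so this is a difference of mechanism rather than a gap. Your Stage~2 (smooth dependence plus \cite{pelngo1}, Theorem~4.1) is correct and in fact more explicit than the paper, which simply attributes the extension to the full open cylinder to Pelayo--Ng\d{o}c without further comment.
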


By letting $a_3, \dots ,a_n \to \infty$ we see that our main Theorems \ref{main} and \ref{main2} recover these embeddings, at least of compact subsets of $\mathring{D}(1) \times \CC^{n-1}$.
The existence of the embeddings of Theorem \ref{emb1} imply that there are no higher order symplectic capacities as defined in \cite{hofer}, see \cite{pelngo1}.

Let us briefly recall the construction of Theorem \ref{emb1}, at least applied to large compact subsets. It relies on two lemmas. Here we let $\Sigma(\delta)$ denote a once punctured $2$-torus with a symplectic form of area $\delta$.

\begin{lemma} \label{glem1} For all large $S$ and all $\delta>0$ there exists a symplectic embedding $$\phi_1 : B^{2(n-1)}(S) \rightarrow \Sigma(\delta) \times \CC^{n-2}.$$
\end{lemma}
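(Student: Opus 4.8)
The plan is to build the embedding $\phi_1 : B^{2(n-1)}(S) \to \Sigma(\delta) \times \CC^{n-2}$ by first noting that the ball is a subset of a polydisk, $B^{2(n-1)}(S) \subset P(S, \dots, S)$ (an $(n-1)$-fold product of disks), so it suffices to embed a large polydisk. Writing $P(S,\dots,S) = D(S) \times P(S,\dots,S)$ with the first disk factor playing the role that will be spread out over $\Sigma(\delta)$, it is enough to symplectically embed a single large disk $D(S)$ into $\Sigma(\delta) \times \CC$, since we can then take a product with the identity on the remaining $P(S,\dots,S) \subset \CC^{n-2}$. So the lemma reduces to the two-dimensional-to-four-dimensional statement: for all large $S$ and all $\delta > 0$ there is a symplectic embedding $D(S) \hookrightarrow \Sigma(\delta) \times \CC$.

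The key point for this reduced statement is that the once-punctured torus $\Sigma(\delta)$, despite having small area $\delta$, has a noncompact, ``long'' geometry near its puncture, and one can wind a thin rectangle around the torus many times. Concretely, I would present $\Sigma(\delta)$ as a square of area $\delta$ with opposite sides identified and a small open disk removed, or more usefully as built from a long thin strip $[0,L] \times (0,\eta)$ (of area $L\eta < \delta$) whose ends are glued up after wrapping around the homology generators. A disk $D(S)$ is symplectomorphic to a rectangle $R(1) \times R(S)$ of dimensions $1 \times S$ (up to the usual normalization of areas), and this thin tall rectangle can be mapped into the strip $[0,L]\times(0,\eta) \times \CC$ by a folding/wrapping construction: cut $R(S)$ into roughly $S/L$ horizontal sub-rectangles of length $L$, lay them end-to-end along the strip, and use the extra $\CC$-direction to translate successive pieces so that the map stays injective where the strip wraps around itself on the torus. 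This is exactly the mechanism behind Guth's construction and its quantification in \cite{hindker}; taking $L$ large (allowed once $\delta$, hence $\eta$ with $L\eta<\delta$, is fixed) lets $S$ be arbitrarily large.

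The main obstacle is making the wrapping genuinely embedded rather than merely immersed: where the strip closes up on the torus, two different portions of the domain rectangle are sent to overlapping regions of $\Sigma(\delta)$, and one must use the $\CC$-factor (and, in the full statement, possibly more room in $\CC^{n-2}$) to separate them, while simultaneously keeping all maps symplectic and controlling the images so that they remain inside the prescribed target. Bookkeeping the symplectic shears and translations so that the whole composite map is a well-defined symplectomorphism onto its image — and checking the images of the finitely many sub-rectangles are pairwise disjoint after the $\CC$-translations — is the technical heart; everything else (the reduction from ball to polydisk to a single disk, the product with the identity) is routine. Since the lemma is quoted from \cite{guth} and \cite{hindker}, I would at this point simply cite those references for the detailed construction and verification, rather than reproduce the estimates.
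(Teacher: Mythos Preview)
The paper does not actually prove this lemma; it is merely stated as part of a recollection of Guth's construction, with implicit reference to \cite{guth} and \cite{hindker}. So there is no ``paper's own proof'' to compare against, and your instinct at the end to cite those references is in line with what the paper does. That said, your sketch of the argument contains a genuine error worth flagging.

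Your reduction is dimensionally off by one. You propose to embed a single disk $D(S)$ into $\Sigma(\delta)\times\CC$ and then take the product with the identity on the remaining $P(S,\dots,S)\subset\CC^{n-2}$. But that produces a map
\[
D(S)\times D(S)^{n-2}\ \longrightarrow\ \bigl(\Sigma(\delta)\times\CC\bigr)\times\CC^{n-2}\ =\ \Sigma(\delta)\times\CC^{n-1},
\]
which has one $\CC$-factor too many in the target. (Indeed the statement ``$D(S)\hookrightarrow\Sigma(\delta)\times\CC$'' is trivially true by including $D(S)$ into the $\CC$-factor, which is a sign that it cannot be the nontrivial core of the lemma.) What Guth's construction actually does is wrap the \emph{base} disk $D(S)$ many times around $\Sigma(\delta)$ as an \emph{immersion} into the $2$-dimensional surface itself, and then resolve the resulting self-overlaps by displacing the \emph{fibres} $D(S)^{n-2}$ inside the existing $\CC^{n-2}$; no extra $\CC$-factor is introduced. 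In other words, the displacement direction you want to use is not an auxiliary $\CC$ attached to the single disk, but the $\CC^{n-2}$ that already carries the remaining polydisk factors. Once you rephrase your wrapping/folding description in those terms, the rest of your outline (cut the long rectangle into pieces, lay them around the torus, shift successive pieces to make the map injective) matches the actual construction.
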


\begin{lemma} \label{glem2} For all $\epsilon>0$ there exists a $\delta>0$ such that we have symplectic embeddings $$\phi_2:D(1-\epsilon) \times \Sigma(\delta) \rightarrow B^4(3)$$ and $$\phi'_2:D(1-\epsilon) \times \Sigma(\delta) \rightarrow P(2,2).$$
\end{lemma}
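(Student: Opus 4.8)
The plan is to construct these embeddings directly, quantifying Guth's argument in the style of \cite{hindker}: one realises the once-punctured torus inside the ball (respectively the cube) by assembling it from standard product pieces and then gluing them up by symplectic folding, in the sense of \cite{schl}. Fix $\epsilon>0$ throughout; the area $\delta$ will be chosen small only at the end, and the various auxiliary ``slack'' parameters smaller still.

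The first step is a convenient model for $\Sigma(\delta)$. Give the once-punctured torus a handle decomposition: a $2$-disk $D_0$ together with two thin $1$-handles $B_1,B_2$ attached to $\partial D_0$ along four disjoint arcs occurring in the cyclic order $a_1,a_2,a_1',a_2'$, where $B_1$ joins $a_1$ to $a_1'$ and $B_2$ joins $a_2$ to $a_2'$. The interleaving of the two pairs of attaching arcs is exactly what makes the result a genus-one surface rather than a three-holed sphere, and deleting an open collar of the resulting boundary circle produces the puncture; arrange every piece to have area $O(\delta)$. Then $D(1-\epsilon)\times\Sigma(\delta)$ is the union of the three honest products $D(1-\epsilon)\times D_0$, $D(1-\epsilon)\times B_1$ and $D(1-\epsilon)\times B_2$, glued along the slabs $D(1-\epsilon)\times a_1$, $D(1-\epsilon)\times a_1'$, $D(1-\epsilon)\times a_2$, $D(1-\epsilon)\times a_2'$.

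Next I would embed this union into $B^4(3)=\{\pi|z_1|^2+\pi|z_2|^2<3\}$ in stages, the disk factor always occupying the $z_1$-directions, where $\pi|z_1|^2<1-\epsilon$, and the surface living over $\CC_{z_2}$. The piece $D(1-\epsilon)\times D_0$ goes in trivially, with $D_0$ a small disk about the origin of $\CC_{z_2}$. The thickened bands $D(1-\epsilon)\times B_1$ and $D(1-\epsilon)\times B_2$ must then be routed out from $\partial D_0$, around in the angular coordinate of $z_2$, and back, matching the prescribed gluings and remaining disjoint from one another and from $D(1-\epsilon)\times D_0$. The interleaving of the bands makes this routing delicate: it turns out that realising it symplectically forces one band to be carried past the other by a fold that temporarily lifts its $D(1-\epsilon)$-fibre in the $z_1$-direction, and that this fold can be performed within, but not below, capacity $3$ --- this last point is the heart of \cite{guth} and \cite{hindker}. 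The cube $P(2,2)=\{\pi|z_1|^2\le 2,\ \pi|z_2|^2\le 2\}$ is handled by the same scheme, now exploiting its two equal factors symmetrically: the fold costs $2$ in the $z_1$-direction while the surface is kept inside $\{\pi|z_2|^2\le 2\}$ with room to spare.

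The main obstacle is precisely the control of this fold. It has to implement the interleaved handle attachment honestly, preserve the symplectic form, keep the image embedded with all pieces pairwise disjoint, leave exactly one puncture, and --- crucially --- waste essentially no room: by the obstructions discussed in section~\ref{obstr} (cf.\ \cite{hindker}) the targets $B^4(3)$ and $P(2,2)$ are already sharp, so any inefficiency in the folding is fatal. Supplying this sharp quantitative folding estimate is the real content of the lemma; with it in hand, choosing $\delta$ and the slack parameters as functions of $\epsilon$ and verifying the resulting area inequalities is routine.
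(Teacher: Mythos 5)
There is a genuine gap, and it sits exactly where the lemma lives. Your write-up is an outline whose decisive step is explicitly deferred: you say that the interleaved bands force a fold which ``can be performed within, but not below, capacity $3$'' and that ``supplying this sharp quantitative folding estimate is the real content of the lemma.'' That estimate \emph{is} the lemma, so as written the argument is circular (it appeals back to \cite{guth} and \cite{hindker} for the very statement to be proved). Note also that the ``not below'' half is the obstruction of \cite{hindker} (Theorem \ref{gopt} here); it is irrelevant to constructing $\phi_2,\phi_2'$ and should not be bundled into the constructive step. For comparison, the paper itself does not prove Lemma \ref{glem2} at all: it is recalled as part of Guth's construction, as quantified in \cite{hindker} and extended in \cite{pelngo1}, \cite{pelngo2}, so a citation would be legitimate --- but a self-contained proof must actually produce the embedding and the constants.

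Concretely, the missing idea is the mechanism by which the constants $3$ and $2$ arise, and without it your scheme cannot even distinguish $B^4(3)$ from $B^4(2+O(\delta))$. The naive move --- lifting one thickened band over the other by displacing its $D(1-\epsilon)$-fibre in the $z_1$-plane as a function of the surface coordinate --- is not a symplectic map: a fibred family of Hamiltonian displacements must be cut off along the band, and (as in Lemma \ref{polymove} and Lemma \ref{image} of section \ref{constr}, or in Guth's original argument) the cut-off Hamiltonian's variation reappears as extra area swept in the base/surface direction, of size the Hofer norm of the displacing isotopy, here $\approx 1-\epsilon$ since one must displace a disk of area $1-\epsilon$ off itself. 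The correct bookkeeping is: the $z_1$-coordinate must contain two essentially disjoint disks of area $1-\epsilon$ (area $\approx 2$), the $z_2$-coordinate must absorb the original fibre of area $1-\epsilon$ plus this action cost (area $\approx 2$), and the two contributions must be interleaved so that $\pi|z_1|^2+\pi|z_2|^2$ never exceeds $\approx 3$ (large $z_1$ only where $z_2$ is small), which is what yields $B^4(3)$, while keeping each coordinate below $2$ yields $P(2,2)$. None of this accounting, nor the verification that the glued map is symplectic, injective, and respects the handle identifications, appears in your proposal; ``choosing $\delta$ and the slack parameters\ldots is routine'' only after that work is done. Either supply this quantitative folding (the estimates of \cite{schl} plus the cut-off analysis as in Lemma \ref{image}), or simply cite \cite{guth}, \cite{hindker}, \cite{pelngo1}, \cite{pelngo2} as the paper does.
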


The compositions $(\phi_2 \times \mathrm{id}) \circ (\mathrm{id} \times \phi_1)$ and $(\phi'_2 \times \mathrm{id}) \circ (\mathrm{id} \times \phi_1)$ give symplectic embeddings $D(1-\epsilon) \times B^{2(n-1)}(S) \rightarrow B^4(3) \times \CC^{n-2}$ and $D(1-\epsilon) \times B^{2(n-1)}(S) \rightarrow P(2,2) \times \CC^{n-2}$ respectively.

Now, it was shown in \cite{hindker} that Guth's construction is optimal in the following sense.

\begin{theorem} \label{gopt} (Hind-Kerman \cite{hindker} Theorem $1.2$ and Theorem $1.5$) If $R<3$ then there exist $\epsilon, S >0$ such that there does not exist a symplectic embedding $$D(1-\epsilon) \times B^{2(n-1)}(S) \rightarrow B^4(R) \times \CC^{n-2}.$$
If $R_1<2$ or $R_2<2$ then there exist $\epsilon, S >0$ such that there does not exist a symplectic embedding $$D(1-\epsilon) \times B^{2(n-1)}(S) \rightarrow P(R_1,R_2) \times \CC^{n-2}.$$
\end{theorem}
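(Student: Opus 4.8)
\emph{The ball case.}
Assume $R<3$ and, for contradiction, that for every $\epsilon\in(0,1)$ and every $S>0$ there is a symplectic embedding $D(1-\epsilon)\times B^{2(n-1)}(S)\to B^4(R)\times\CC^{n-2}$; I will deduce $R\ge 3$. Fix an integer $d\ge1$ and set $S=3d-1$. A point of $(1-\epsilon)E(1,3d-1,\dots,3d-1)$ satisfies $\pi|z_1|^2\le 1-\epsilon$ and $\sum_{j\ge2}\pi|z_j|^2\le(1-\epsilon)(3d-1)<3d-1$, so $(1-\epsilon)E(1,3d-1,\dots,3d-1)\subseteq D(1-\epsilon)\times B^{2(n-1)}(3d-1)$; composing with the assumed embedding gives a symplectic embedding of $(1-\epsilon)E(1,3d-1,\dots,3d-1)$ into $B^4(R)\times\CC^{n-2}$. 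Conjugating by the dilation $z\mapsto(1-\epsilon)^{-1/2}z$, which carries symplectic embeddings to symplectic embeddings and multiplies all capacities by $(1-\epsilon)^{-1}$, turns this into a symplectic embedding of the \emph{closed} ellipsoid $E(1,3d-1,\dots,3d-1)$ into $B^4\!\big(R/(1-\epsilon)\big)\times\CC^{n-2}$. Since $3d-1\ge1$ and $a_3=\dots=a_n=3d-1\ge a_2$, the optimality half of Theorem~\ref{main} forces $R/(1-\epsilon)\ge\tfrac{3(3d-1)}{3d}=3-\tfrac1d$, i.e.\ $R\ge(1-\epsilon)(3-\tfrac1d)$. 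Letting $\epsilon\to0$ and then $d\to\infty$ gives $R\ge3$, the desired contradiction. (This is legitimate because Theorem~\ref{main} is stated earlier in the excerpt; of course its optimality half is itself proved by obstructions of the present type, so this shortcut is circular within the paper's overall logic.)

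\emph{The polydisk case.}
Relabel so that $R_1=\min(R_1,R_2)<2$. If $R_1<1$ the claim is immediate from the polydisk form of Gromov's nonsqueezing (Theorem~\ref{ns} and the remark after it): the sub-ellipsoid $(1-\epsilon)E(1,S,\dots,S)\subseteq D(1-\epsilon)\times B^{2(n-1)}(S)$ has first capacity $1-\epsilon$, which would have to be $\le R_1$, impossible for $\epsilon$ small. If also $R_2<2$, then $P(R_1,R_2)\subseteq P(M,M)$ with $M=\max(R_1,R_2)<2$, and repeating the ball-case argument with $S=2d+1$ and Theorem~\ref{main2} in place of Theorem~\ref{main} gives $M\ge\tfrac{2(2d+1)}{2d+2}=2-\tfrac1{d+1}$, hence $M\ge2$, a contradiction. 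The one case these reductions miss is $1\le R_1<2\le R_2$; here $D(1-\epsilon)\times B^{2(n-1)}(S)$ lies inside $D(R_1)\times\CC^{n-1}$, which carries no obstruction beyond nonsqueezing, so one must invoke the holomorphic-curve obstruction of \cite{hindker}. One compactifies the target to $W=S^2(R_1)\times S^2(R_2)\times Y$, with $S^2(a)$ the sphere of area $a$ and $Y$ a closed symplectic manifold containing the bounded $\CC^{n-2}$-projection of the image and free of small holomorphic spheres, smooths the corner of $\partial(D(1-2\epsilon)\times B^{2(n-1)}(S'))$ so that its minimal Reeb action is $\approx 1-2\epsilon$ (the ball-direction orbits, for $S'$ large, having action $\gtrsim S'$), neck-stretches $W$ along the $\psi$-image of this hypersurface, and analyses the SFT-limits of $J$-holomorphic curves representing the $S^2(R_1)$-line class and its multiples; the number $2=c_1(W)\cdot[S^2(R_1)]$ governs which asymptotics are admissible, and tracking $\omega$-areas (by Stokes the part of a limit building in the Liouville completion of $\psi(\text{domain})$ has area equal to the total action of its positive ends) against the Fredholm-index and homology budget forces $R_1\ge 2-O(\epsilon)$, contradicting $R_1<2$ for $\epsilon$ small.

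\emph{Where the difficulty lies.}
Given Theorems~\ref{main} and~\ref{main2}, the reductions are routine, and they dispose of the ball case and of the polydisk case whenever $\min(R_1,R_2)<1$ or $\max(R_1,R_2)<2$. All the genuine difficulty is in the residual polydisk case: a single holomorphic curve through one point only recovers Gromov nonsqueezing, since its component in the filling can be a lone plane over the short Reeb orbit, so the sharp constant $2$ requires the finer asymptotic/weight analysis --- the higher-dimensional counterpart of the ECH-capacity obstructions underlying the four-dimensional classifications of McDuff--Schlenk \cite{mcdsch} and Frenkel--M\"{u}ller \cite{frmu} --- in which one controls entire families of curves, rules out unhelpful limit buildings, and computes the relative first Chern numbers precisely; this is also where the hypothesis that $S$ be large is essential, namely to evict the ball-direction Reeb orbits from the relevant action window. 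A secondary point is keeping the almost complex structure generic enough for the limit analysis while it is pinned down near the stretching hypersurface, which one handles by perturbing it off a collar.
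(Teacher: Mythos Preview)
The paper does not prove Theorem~\ref{gopt} at all: it is quoted verbatim from \cite{hindker} (Theorems~1.2 and~1.5 there) and no argument is supplied. So there is no ``paper's own proof'' to compare against; what the paper does contain is the closely related quantitative input, Theorems~\ref{fep} and~\ref{fep2}, which are also imported from \cite{hindker} and are what actually drive the optimality halves of Theorems~\ref{main} and~\ref{main2}.

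Your ball-case argument is formally correct but, as you yourself note, circular within the paper: the optimality clause of Theorem~\ref{main} is proved in Section~\ref{obstr} from Theorem~\ref{fep}, which is precisely the finite-energy-plane obstruction of \cite{hindker} that underlies Theorem~\ref{gopt}. So you have not given an independent proof; you have observed (correctly) that Theorem~\ref{gopt} follows from Theorem~\ref{fep} by taking $d\to\infty$, routed through Theorem~\ref{main}. One could equally well bypass Theorem~\ref{main} and argue directly from Theorem~\ref{fep}: a symplectic embedding of $E(1,3d-1,\dots,3d-1)$ into $B^4(R)\times\CC^{n-2}$ forces $dR>3d-1$, hence $R>3-\tfrac1d$, and the limit gives $R\ge 3$.

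Your polydisk case is both circular in the same way and unnecessarily convoluted. The split into the three subcases is not needed: the area inequality following Theorem~\ref{fep2} in Section~\ref{obstr}, namely $dR_1+R_2>2d+1$ for all $d\ge1$ (with $R_1\le R_2$), immediately yields $R_1\ge 2$ upon letting $d\to\infty$, and this covers \emph{every} pair $(R_1,R_2)$ with $\min(R_1,R_2)<2$, including your ``residual'' case $1\le R_1<2\le R_2$. Your separate holomorphic-curve sketch for that residual case is therefore redundant, and in any event it is too vague to count as a proof: phrases like ``tracking $\omega$-areas \dots\ against the Fredholm-index and homology budget forces $R_1\ge 2-O(\epsilon)$'' omit exactly the work that makes the \cite{hindker} argument nontrivial.

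In short: there is no gap in what you wrote as a \emph{deduction} from later results in the paper, but it is not a proof of Theorem~\ref{gopt} in any self-contained sense, and the paper itself makes no attempt at one.
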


It follows that the embeddings of Lemma \ref{glem2} must also be optimal, that is, we have the following.

\begin{theorem} If $R<3$ and $\delta>0$, then for sufficiently small $\epsilon>0$ there does not exist a symplectic embedding $$D(1-\epsilon) \times \Sigma(\delta) \rightarrow B^4(R).$$ If $\min(R_1,R_2)<2$ and $\delta>0$, then for sufficiently small $\epsilon>0$ there does not exist a symplectic embedding $$D(1-\epsilon) \times \Sigma(\delta) \rightarrow P(R_1,R_2).$$
\end{theorem}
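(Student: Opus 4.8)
The plan is a stabilization argument: promote the hypothetical $4$-dimensional embedding to a $2n$-dimensional one by crossing with a Euclidean factor, use Guth's Lemma \ref{glem1} to fill that factor with a large ball, and then contradict Theorem \ref{gopt}. The statement to be proved is purely $4$-dimensional, but the proof uses the extra room in $\RR^{2n}$, so I would fix once and for all some $n \ge 3$ (say $n = 3$), so that the auxiliary factor $\CC^{n-2}$ is nontrivial. Fix $\delta > 0$ and suppose $R < 3$. Negating the desired conclusion yields a sequence $\epsilon_k \to 0^+$ together with symplectic embeddings $\psi_k : D(1-\epsilon_k) \times \Sigma(\delta) \to B^4(R)$. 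By Theorem \ref{gopt} there are $\epsilon_0, S_0 > 0$ for which no symplectic embedding $D(1-\epsilon_0) \times B^{2(n-1)}(S_0) \to B^4(R) \times \CC^{n-2}$ exists; by monotonicity (a symplectic embedding restricts to any subdomain) the same holds with $\epsilon_0$ replaced by any $\epsilon \le \epsilon_0$ and $S_0$ replaced by any $S \ge S_0$.

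First I would choose $k$ with $\epsilon_k \le \epsilon_0$ and choose $S \ge S_0$ large enough that Lemma \ref{glem1} supplies a symplectic embedding $\phi_1 : B^{2(n-1)}(S) \to \Sigma(\delta) \times \CC^{n-2}$. Crossing with the identity on $D(1-\epsilon_k)$ and reordering the factors gives a symplectic embedding $D(1-\epsilon_k) \times B^{2(n-1)}(S) \to \bigl(D(1-\epsilon_k) \times \Sigma(\delta)\bigr) \times \CC^{n-2}$. Post-composing with $\psi_k \times \mathrm{id}_{\CC^{n-2}}$ then produces a symplectic embedding
$$D(1-\epsilon_k) \times B^{2(n-1)}(S) \longrightarrow B^4(R) \times \CC^{n-2},$$
contradicting the previous paragraph. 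Hence no such sequence $\{\psi_k\}$ exists, which is exactly the assertion for sufficiently small $\epsilon$. The polydisk statement follows by the identical argument, with $P(R_1,R_2)$ in place of $B^4(R)$, invoking the second half of Theorem \ref{gopt} (applicable since $\min(R_1,R_2) < 2$) and the same Lemma \ref{glem1}.

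There is no analytic difficulty here: the argument is a formal deduction from the two stated results, and the only point to watch is the interplay of parameters — one must pass to a sufficiently small $\epsilon$ in the sequence and a sufficiently large $S$ so that Lemma \ref{glem1} and Theorem \ref{gopt} apply simultaneously, which is precisely what monotonicity of symplectic embeddability under enlarging the domain provides. I would expect the written proof to occupy only a few lines beyond the displayed composition.
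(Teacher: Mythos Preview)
Your proposal is correct and follows exactly the approach the paper has in mind: the paper states the theorem immediately after Theorem \ref{gopt} with the single remark ``It follows that the embeddings of Lemma \ref{glem2} must also be optimal,'' leaving the composition $(\psi_k \times \mathrm{id})\circ(\mathrm{id}\times\phi_1)$ implicit. You have simply written out the details of that deduction, including the bookkeeping on $\epsilon$ and $S$, which the paper omits entirely.
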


In conclusion, this kind of construction, that is, factoring through $\phi_1$, will not give symplectic embeddings even of compact ellipsoids $E(1,S, \dots ,S)$ into a $B^4(R) \times \CC^{n-2}$ with $R<3$ or a $P(R_1,R_2) \times \CC^{n-2}$ with $R_1<2$. The purpose of this paper is to show that nevertheless many such embeddings do exist.

\end{subsection}

\begin{subsection}{$4$-dimensional embeddings and capacities.}\label{capacities}

A complete understanding of our embedding problem, in the case of ellipsoid embeddings into products of a ball and Euclidean space, amounts to computing the capacity function $$f(a_2, \dots ,a_n) = \inf \{R|E(1,a_2, \dots, a_n) \hookrightarrow B^4(R) \times \CC^{n-2}\}.$$

We know very little about such functions. For example, Figure $1$ in \cite{busehind} describes our sparse knowledge of embeddings of $6$-dimensional ellipsoids into balls.

As far as obstructions are concerned (that is, lower bounds on $f$), the only known invariants in dimension $2n \ge 6$ besides those discussed in section \ref{obstr} are the Ekeland-Hofer capacities, see \cite{ekehof}. These imply the following.

\begin{proposition}\label{cap1} Suppose $1 \le a_2 \le \dots \le a_n$. If $a_2 \le 2$ then $f(a_2, \dots ,a_n)=a_2$. If $a_2 \ge 2$ then $f(a_2, \dots ,a_n) \ge 2$.
\end{proposition}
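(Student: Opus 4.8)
The plan is to obtain the lower bounds on $f$ from the second Ekeland--Hofer capacity and to match them, in the range $a_2\le 2$, with the trivial linear inclusion. Recall from \cite{ekehof} that the Ekeland--Hofer capacities $c_k^{\mathrm{EH}}$ are monotone under symplectic embeddings, and that for an ellipsoid $c_k^{\mathrm{EH}}\big(E(a_1,\dots,a_n)\big)$ is the $k$-th smallest element, counted with multiplicity, of the set $\{m a_j:\,m\ge 1,\ 1\le j\le n\}$ of positive integer multiples of the $a_j$; in particular $c_k^{\mathrm{EH}}$ scales linearly in the $a_j$. Since $a_1=1\le a_2\le\dots\le a_n$, this list for $E(1,a_2,\dots,a_n)$ begins $1,\ \min(2,a_2),\dots$, so $c_2^{\mathrm{EH}}\big(E(1,a_2,\dots,a_n)\big)=\min(2,a_2)$; and for a bounded ellipsoid $E(R,R,M,\dots,M)$ with $M\ge R$ one has $c_2^{\mathrm{EH}}=R$.

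For the lower bound, suppose $E(1,a_2,\dots,a_n)\hookrightarrow B^4(R)\times\CC^{n-2}$ and fix $\epsilon\in(0,1)$. The image of the compact set $(1-\epsilon)E(1,a_2,\dots,a_n)$ under a symplectic embedding into $B^4(R)\times\CC^{n-2}$ is compact, hence lies in $B^4(R)\times B^{2(n-2)}(M)$ for some $M>0$, and an elementary estimate shows $B^4(R)\times B^{2(n-2)}(M)\subseteq E(R',R',M',\dots,M')$ whenever $R'>R$ and $M'$ is large enough (so we may also assume $M'\ge R'$). Composing and applying $c_2^{\mathrm{EH}}$ with the values above gives $(1-\epsilon)\min(2,a_2)\le R'$; letting $R'\to R$, then $\epsilon\to 0$, and taking the infimum over admissible $R$ yields $f(a_2,\dots,a_n)\ge\min(2,a_2)$. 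Thus $f(a_2,\dots,a_n)\ge a_2$ when $a_2\le 2$, and $f(a_2,\dots,a_n)\ge 2$ when $a_2\ge 2$.

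For the matching upper bound when $a_2\le 2$, note that $a_2\ge 1$ forces $\pi|z_1|^2/a_2\le\pi|z_1|^2$, so every point of $E(1,a_2,\dots,a_n)$ satisfies $\pi(|z_1|^2+|z_2|^2)/a_2\le\pi|z_1|^2+\pi|z_2|^2/a_2\le 1$; hence $E(1,a_2,\dots,a_n)\subseteq B^4(a_2)\times\CC^{n-2}$ and $f(a_2,\dots,a_n)\le a_2$. Together with the lower bound this gives $f(a_2,\dots,a_n)=a_2$ for $1\le a_2\le 2$. The argument is essentially bookkeeping with the Ekeland--Hofer axioms; the only point requiring a little care is replacing the non-compact target $B^4(R)\times\CC^{n-2}$ by a genuine bounded ellipsoid before invoking the capacity formula --- done above via compactness of the image --- or, alternatively, computing $c_2^{\mathrm{EH}}(B^4(R)\times\CC^{n-2})=R$ directly from the exhaustion $B^4(R)\times\CC^{n-2}=\bigcup_M E(R,R,M,\dots,M)$.
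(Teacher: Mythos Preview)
Your proof is correct and follows essentially the same approach as the paper: both use the second Ekeland--Hofer capacity to obtain the lower bound $f(a_2,\dots,a_n)\ge\min(2,a_2)$ and the inclusion $E(1,a_2,\dots,a_n)\subset B^4(a_2)\times\CC^{n-2}$ for the matching upper bound when $a_2\le 2$. Your version is simply more explicit about two points the paper leaves tacit --- the capacity of the non-compact target (which you justify via exhaustion by bounded ellipsoids) and the verification that the inclusion really lands in $B^4(a_2)\times\CC^{n-2}$.
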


\begin{proof}
First suppose $a_2 \le 2$. Then the second Ekeland-Hofer capacity of the ellipsoid $E=E(1,a_2, \dots, a_n)$ is $c_2(E)=a_2$. The corresponding capacity of the product $Z(R)=B^4(R) \times \CC^{n-2}$ is $c_2(Z(R))=R$ and so by monotonicity of the capacities we have $f(a_2, \dots ,a_n) \ge a_2$. Since the inclusion map gives an embedding $E \to Z(a_2)$, this is in fact an equality.

Next suppose $a_2 \ge 2$. Now the second Ekeland-Hofer capacity of the ellipsoid $E$ is $c_2(E)=2$ and so we see that $f(a_2, \dots ,a_n) \ge 2$.
\end{proof}

For constructions (that is, upper bounds on $f$) we note that if we have a symplectic embedding $\phi:E(1,a_2) \rightarrow B^4(R)$ then the product $(z_1, z_2, \dots ,z_n) \mapsto (\phi(z_1,z_2), z_3, \dots ,z_n)$ of course gives an embedding $E(1,a_2, \dots, a_n) \hookrightarrow B^4(R) \times \CC^{n-2}$. Hence we see immediately that we have the bound
\begin{equation}\label{one}f(a_2, \dots ,a_n) \le c_B(a_2)\end{equation}
where $c_B$ is the analogous $4$-dimension capacity $$c_B(a) = \inf \{R|E(1,a) \hookrightarrow B^4(R) \}.$$
It turns out that the function $c_B$ has been completely worked out by McDuff and Schlenk in \cite{mcdsch}. One conclusion is that $c_B(a)=2$ when $2 \le a \le 4$. Together with Proposition \ref{cap1} this implies that equation (\ref{one}) is an equality when $2 \le a_2 \le 4$ provided $a_3, \dots ,a_n \ge a_2$. In other words we have the following.

\begin{corollary} Suppose $2 \le \min(a_2, \dots ,a_n) \le 4$. Then $f(a_2, \dots ,a_n) = 2$.
\end{corollary}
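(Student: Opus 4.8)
The plan is to reduce to the situation already treated in the discussion preceding the statement, namely an ellipsoid $E(1,a_2,\dots,a_n)$ with $a_3,\dots,a_n\ge a_2$ and $2\le a_2\le 4$, and then to combine the lower bound of Proposition~\ref{cap1} with the upper bound~(\ref{one}).

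First I would note that the function $f$ is invariant under permutations of the arguments $a_2,\dots,a_n$: permuting the coordinates $z_2,\dots,z_n$ of $\CC^n$ is a symplectomorphism carrying $E(1,a_2,\dots,a_n)$ onto $E(1,a_{\sigma(2)},\dots,a_{\sigma(n)})$ while preserving $B^4(R)\times\CC^{n-2}$ (the $B^4$ factor involves only $z_1$, and the $\CC^{n-2}$ factor is merely relabelled). Hence without loss of generality we may assume $1<a_2\le a_3\le\dots\le a_n$, so that $a_2=\min(a_2,\dots,a_n)$ and in particular $2\le a_2\le 4$.

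For the lower bound, since now $1\le a_2\le\dots\le a_n$ and $a_2\ge 2$, Proposition~\ref{cap1} gives $f(a_2,\dots,a_n)\ge 2$. For the upper bound, inequality~(\ref{one}) gives $f(a_2,\dots,a_n)\le c_B(a_2)$, and I would invoke the computation of $c_B$ by McDuff and Schlenk in \cite{mcdsch}, according to which $c_B(a)=2$ for all $2\le a\le 4$; since $a_2$ lies in this range, $c_B(a_2)=2$. Combining the two inequalities yields $f(a_2,\dots,a_n)=2$.

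There is no genuine obstacle here: the whole content is carried by the cited results. The only point requiring a moment's care is the reduction by symmetry — checking that relabelling the $a_j$ leaves $B^4(R)\times\CC^{n-2}$ unchanged, which is immediate — and, of course, the substantive input is the nontrivial fact $c_B\equiv 2$ on $[2,4]$ from \cite{mcdsch}, which supplies the construction side of the equality.
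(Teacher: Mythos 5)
Your proof is correct and is essentially the paper's own argument: the lower bound comes from Proposition~\ref{cap1} and the upper bound from inequality~(\ref{one}) together with the McDuff--Schlenk fact that $c_B(a)=2$ for $2\le a\le 4$. One small slip in your symmetry reduction: the factor $B^4(R)$ involves the coordinates $z_1$ \emph{and} $z_2$, not only $z_1$, so a permutation of $z_2,\dots,z_n$ does not preserve $B^4(R)\times\CC^{n-2}$; the reduction still works, though, because you only need to precompose an embedding of the sorted ellipsoid with the coordinate permutation acting on the domain, which carries $E(1,a_2,\dots,a_n)$ symplectomorphically onto $E(1,a_{\sigma(2)},\dots,a_{\sigma(n)})$ and hence shows $f$ is symmetric in its arguments.
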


A consequence of our main theorem, together with our knowledge of the function $c_B$ from \cite{mcdsch}, is that inequality (\ref{one}) is strict when $a_2> \tau^4$ but conceivably could be an equality when $a_2 \le \tau^4$ provided $a_3, \dots ,a_n \ge a_2$.

Let us describe a part of the function $c_B$ which relates nicely to our construction. Let $g_0=1$ and $\{g_n\}_{n=0}^{\infty}$ be the sequence of odd Fibonacci numbers, that is, the sequence beginning $1, 2, 5, 13, 34, \dots$. Then we can define a sequence $\{b_n\}_{n=0}^{\infty}$ by $b_n=\frac{g_{n+2}}{g_n}$. We have $\lim_{n \to \infty} b_n = \tau^4 = \frac{7+3\sqrt{5}}{2}$. Given this, Theorem $1.1.2$ in \cite{mcdsch}, together with a little manipulation of Fibonacci numbers, implies the following.

\begin{theorem} \label{cfn} (McDuff-Schlenk, \cite{mcdsch}, Theorem $1.1.2$) For all $n \ge 0$ we have $c_B(b_n) = \frac{g_{n+2}}{g_{n+1}} = \frac{3b_n}{b_n+1}$.
\end{theorem}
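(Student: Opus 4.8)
The plan is to read Theorem \ref{cfn} off directly from McDuff--Schlenk's Theorem $1.1.2$, which gives a complete description of $c_B$ on $[1,\tau^4]$ as the Fibonacci staircase, and then to simplify the resulting expression using the recursion satisfied by the odd-index Fibonacci numbers. So the proof divides into: (i) elementary facts about $g_n$ and $b_n$; (ii) locating the $b_n$ among the corners of the staircase and quoting the value of $c_B$ there; (iii) the Fibonacci identity relating that value to $\tfrac{3b_n}{b_n+1}$.

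First I would record what is needed about the sequences. The odd-index Fibonacci numbers satisfy $g_{n+1}=3g_n-g_{n-1}$, equivalently $g_{n-1}+g_{n+1}=3g_n$, as one checks on $1,2,5,13,34,\dots$; in particular they are strictly increasing, the ratios $g_{n+1}/g_n$ increase to $\tau^2$, and hence $b_n=g_{n+2}/g_n=(g_{n+2}/g_{n+1})(g_{n+1}/g_n)$ increases to $\tau^4$. Since $b_0=g_2/g_0=5$, every $b_n$ lies in $[5,\tau^4)\subset[1,\tau^4)$, so Theorem $1.1.2$ does apply at each point $a=b_n$.

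Next comes the one bookkeeping step: identifying the points $b_n$ with the corners of the staircase. On $[1,\tau^4]$ the graph of $c_B$ is a piecewise linear curve lying on or above the volume curve $a\mapsto\sqrt a$; its segments on which $c_B$ increases alternate with segments on which $c_B$ is constant, and the ``inner'' break points --- where an increasing segment meets the following horizontal one --- occur in McDuff--Schlenk's description precisely at $a=b_n$, with value $c_B(b_n)=g_{n+2}/g_{n+1}$. Concretely, the increasing segment running into the $n$-th such corner is the segment of the line through $\big((g_{n+1}/g_n)^2,\,g_{n+1}/g_n\big)$ and $\big(b_n,\,g_{n+2}/g_{n+1}\big)$, so one may equally well just substitute $a=b_n$ into McDuff--Schlenk's formula for that segment; the case $n=0$ is the familiar value $c_B(5)=5/2$, sitting immediately to the right of the outer corner $c_B(4)=2$. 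Matching their indexing of the staircase steps to the sequence $(g_n)$, and checking that no $b_n$ coincides with an ``outer'' corner or leaves $[1,\tau^4]$, is the whole of this step.

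Finally I would establish the second equality. Since $b_n=g_{n+2}/g_n$, the recursion gives $b_n+1=(g_{n+2}+g_n)/g_n=3g_{n+1}/g_n$, whence
$$\frac{3b_n}{b_n+1}=\frac{3\,g_{n+2}/g_n}{3\,g_{n+1}/g_n}=\frac{g_{n+2}}{g_{n+1}},$$
which together with step (iii) proves the theorem. The ``hard part'' is not in this argument at all --- it is McDuff and Schlenk's Theorem $1.1.2$, which we are merely quoting; the only genuine care needed is in aligning the indexing conventions and confirming that the sequence $b_n$ defined in this paper picks out exactly the inner corners of their staircase.
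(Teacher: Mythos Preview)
Your proposal is correct and matches the paper's own treatment: the paper does not give a separate proof but simply cites McDuff--Schlenk's Theorem~1.1.2 and remarks that the stated equality follows ``together with a little manipulation of Fibonacci numbers.'' Your steps (i)--(iii) spell out precisely that manipulation, with the recursion $g_{n+2}+g_n=3g_{n+1}$ yielding $b_n+1=3g_{n+1}/g_n$ and hence $\tfrac{3b_n}{b_n+1}=g_{n+2}/g_{n+1}$, exactly as intended.
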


We compare all of this with our main theorem, which can be stated as follows.

\begin{theorem}\label{mainn} $$f(a_2, \dots ,a_n) \le \frac{3a_2}{a_2+1}.$$ Moreover, we have equality in the case when $a_2=3d-1$ for a positive integer $d$ and $a_3, \dots ,a_n \ge a_2$.
\end{theorem}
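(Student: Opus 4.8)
The inequality and the optimality statement are established separately, and they match precisely when $a_2 = 3d-1$. For the bound $f(a_2,\dots,a_n)\le\tfrac{3a_2}{a_2+1}$ one must produce, for every $0<\epsilon<1$, a symplectic embedding $(1-\epsilon)E(1,a_2,\dots,a_n)\hookrightarrow B^4\big(\tfrac{3a_2}{a_2+1}\big)\times\CC^{n-2}$; for the equality at $a_2=3d-1$ (with $a_3,\dots,a_n\ge a_2$) one must in addition exclude embeddings into $B^4(R)\times\CC^{n-2}$ for $R<\tfrac{3a_2}{a_2+1}=3-\tfrac1d$.

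The plan for the embedding (carried out in section \ref{constr}) is a version of multiple symplectic folding in the spirit of \cite{schl}, using the noncompact factors $z_3,\dots,z_n$ as room to fold into. After a linear symplectic normalization I would regard the slightly shrunk ellipsoid as fibered by the coordinates $(z_2,\dots,z_n)$ over (a subset of) the $z_1$-disc of area $1-\epsilon$, cut it into roughly $d$ congruent slabs in the $z_2$-direction, and move each slab by a compactly supported Hamiltonian isotopy in the variables $z_2,\dots,z_n$ that folds it back over its predecessor; the translations needed to keep the slabs pairwise disjoint are absorbed in the $\CC^{n-2}$ directions. One then checks that the resulting region has $z_1z_2$-footprint contained in a $4$-ball, of capacity $3-\tfrac1d$ when $a_2=3d-1$ and $\tfrac{3a_2}{a_2+1}$ in general, using a continuously varying number of folds for non-integer $d$. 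Sending $a_3,\dots,a_n\to\infty$ recovers Guth's embedding of Theorem \ref{emb1}, as announced in section \ref{1pt1}.

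For the obstruction (section \ref{obstr}) I would argue by contradiction, quantifying the method of \cite{hindker}. Assume $\phi:(1-\epsilon)\mathring E(1,a_2,\dots,a_n)\to B^4(R)\times\CC^{n-2}$ with $R<3-\tfrac1d$ and $a_2\ge 3d-1$, $a_3,\dots,a_n\ge a_2$. Pass to a suitable completion of the target -- compactify the Euclidean factors, e.g.\ to $(\CC P^1)^{n-2}$, and treat the ball end as a cylindrical end carrying the Reeb orbits of $\partial B^4(R)$ -- remove $\phi(E)$, and stretch the neck along $\phi(\partial E)$. A non-vanishing holomorphic curve count in the completion forces the limiting holomorphic building to contain a genus-$0$ level $C$ with a positive end on a Reeb orbit of $\partial B^4(R)\times\CC^{n-2}$, of action $mR$, and a negative end on the orbit $\gamma_1^k$ of $\partial E(1,a_2,\dots,a_n)$, of action $k$. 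A Fredholm-index computation, using the Conley--Zehnder indices of the ellipsoid, shows that such a rigid curve satisfies $k=3m-1$; the resonance structure of $E(1,a_2,\dots,a_n)$ (the orbit $\gamma_1^{a_2}$ being the critical one, which is why $a_2\ge 3d-1$ and $a_j\ge a_2$ are imposed) caps the available $k$ at $3d-1$, so $m\le d$, and the count in fact realizes the extremal curve with $m=d$, $k=3d-1$. Positivity of symplectic area, $\int_C\omega = mR-k\ge 0$, then gives $dR\ge 3d-1$, i.e.\ $R\ge 3-\tfrac1d=\tfrac{3a_2}{a_2+1}$, contradicting the assumption; combined with the embedding this yields equality when $a_2=3d-1$.

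The hard part will be the obstruction: establishing, via Gromov/SFT compactness, that the limit building genuinely contains the curve $C$ with the asserted asymptotics and index -- ruling out degenerations in which the relevant action escapes into symplectization levels or the curve breaks into pieces none of which carries the constraint -- and doing the Conley--Zehnder bookkeeping for $E(1,a_2,\dots,a_n)$ carefully enough to see that the index relation forces exactly $k=3m-1$ with $k$ capped at $3d-1$. This is the step at which the quantitative sharpening of \cite{hindker} is really needed. On the construction side the analogous, more routine, difficulty is making each fold a genuine symplectomorphism onto a region disjoint from the others while arranging the footprint to be a ball of capacity \emph{exactly} $\tfrac{3a_2}{a_2+1}$, rather than merely some constant strictly below $3$.
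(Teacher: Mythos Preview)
Your overall strategy matches the paper's---an explicit folding construction for the upper bound, and a holomorphic-curve area inequality from \cite{hindker} for the lower bound---but the construction you sketch is not the one that actually works, and the obstruction is set up differently.

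On the embedding: you propose to cut into ``roughly $d$ congruent slabs'' in the $z_2$-direction, with a ``continuously varying number of folds for non-integer $d$''. The paper's construction (section \ref{constr}) makes no reference to $d$; it works uniformly for all $a_2\ge 1$. After swapping the first two factors so the domain is $E(S,1,T)$ with $S=a_2$, one squeezes the big disk $D(S)$ into a square of area $\lambda=\tfrac{S}{S+1}$ plus a very long thin arm $[1,2N+1]\times\{0\}$, where $N$ is of order $S/\epsilon$, not $d$. The fibers over the arm lie in the small bidisk $P_1=P(\lambda,T)$; one then applies $N$ fiberwise Hamiltonian displacements of Hofer norm $\le\lambda+\epsilon$ (Lemma \ref{polymove}) to push the fibers over successive unit segments into pairwise disjoint bidisks $P_i$ in the $(z_2,z_3)$-plane, and finally folds the arm in the $z_1$-plane via an immersion $\tau$ into $D(2\lambda)$. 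Each fold costs area $\le\lambda$ in $z_1$ and the fiber stays in $D(2\lambda)$ in $z_2$, so the $(z_1,z_2)$-shadow lands in $B^4(3\lambda)\cap P(2\lambda,2\lambda)$. A scheme with only $d$ folds would not produce capacity exactly $\tfrac{3a_2}{a_2+1}$.

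On the obstruction: the paper compactifies $B^4(R)$ to $\CC P^2(R)$ and leaves $\CC^{n-2}$ non-compact, rather than compactifying the Euclidean factors and keeping $\partial B^4(R)$ as a contact end. The relevant curve is a finite energy plane in $(\CC P^2(R)\times\CC^{n-2})\setminus E$ of degree $d$ (intersection with the line at infinity) with a single negative end on $(3d-1)\gamma$; this is cited directly as Theorem \ref{fep} from \cite{hindker}, and positivity of area gives $dR>3d-1$. There is no ``capping'' of $k$ at $3d-1$: the hypothesis $a_j\ge 3d-1$ is what guarantees \emph{existence} of the degree-$d$ plane with that asymptotic (an index condition), not an upper bound on the cover. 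Your picture with a positive end on $\partial B^4(R)\times\CC^{n-2}$ is morally equivalent after a further neck-stretch, but it adds a layer of SFT compactness that the paper avoids entirely by quoting \cite{hindker}.
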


Note that $f(a_2, \dots ,a_n)$ is clearly a nondecreasing function of $a_2$ if the remaining variables are held fixed. Therefore, assuming $a_3, \dots ,a_n \ge a_2 \ge 2$, the equality part of the statement of the theorem gives $$f(a_2, \dots ,a_n) \ge 3 - \frac{1}{\lfloor (a_2+1)/3 \rfloor}.$$ We observe that this matches the bound from Proposition \ref{cap1} when $2 \le a_2 < 5$ (which is sharp when $a_2 \le 4$), but improves that bound when $a_2 \ge 5$. For $5 \le a_2 <8$ our bound gives $f(a_2, \dots ,a_n) \ge \frac{5}{2}$. Since we also have $c_B(a)=\frac{5}{2}$ when $5 \le a \le \frac{25}{4}$ inequality (\ref{one}) implies the following.

\begin{corollary} Suppose $5 \le \min(a_2, \dots ,a_n) \le \frac{25}{4}$. Then $f(a_2, \dots ,a_n) = \frac{5}{2}$.
\end{corollary}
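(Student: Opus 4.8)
The strategy is to sandwich $f(a_2,\dots,a_n)$ between two equal bounds: an upper bound of $5/2$ coming from inequality (\ref{one}) together with McDuff--Schlenk's value of the $4$-dimensional capacity $c_B$, and a lower bound of $5/2$ coming from the equality part of Theorem \ref{mainn} together with monotonicity of $f$. Everything needed is already available, so the work is just in checking that these two one-variable results overlap over the interval $[5,25/4]$.

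First I would reduce to the case in which $a_2$ is the smallest of $a_2,\dots,a_n$. The function $f$ is invariant under permutations of $a_2,\dots,a_n$, since $E(1,a_2,\dots,a_n)$ and $E(1,a_{\sigma(2)},\dots,a_{\sigma(n)})$ are symplectomorphic via a coordinate permutation fixing $z_1$ (and the same holds after rescaling by $1-\epsilon$), so one embeds into $B^4(R)\times\CC^{n-2}$ exactly when the other does. Relabelling, I may therefore assume $5 \le a_2 \le a_3 \le \dots \le a_n$ and $a_2 \le 25/4$.

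For the upper bound, inequality (\ref{one}) gives $f(a_2,\dots,a_n) \le c_B(a_2)$, and by \cite{mcdsch} one has $c_B(a) = 5/2$ for $5 \le a \le 25/4$; hence $f(a_2,\dots,a_n) \le 5/2$. For the lower bound, I apply the equality part of Theorem \ref{mainn} with $d=2$, so that $3d-1 = 5$: since $a_3,\dots,a_n \ge a_2 \ge 5$, the hypotheses hold and $f(5,a_3,\dots,a_n) = \frac{3\cdot 5}{5+1} = \frac 52$. As $a_2 \ge 5$ and $f$ is nondecreasing in its first argument with the others held fixed, $f(a_2,\dots,a_n) \ge f(5,a_3,\dots,a_n) = \frac 52$. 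Combining the two estimates yields $f(a_2,\dots,a_n) = 5/2$.

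There is no genuine obstacle here; the substantive content lies in Theorem \ref{main} (the folding construction and the matching obstruction) and in the McDuff--Schlenk determination of $c_B$, both of which are quoted. The one point worth verifying carefully is that the two quoted intervals actually cover $[5,25/4]$: the horizontal step of the Fibonacci staircase on which $c_B \equiv 5/2$ is exactly $[5,25/4]$, while the lower bound $3 - 1/\lfloor (a_2+1)/3\rfloor$ provided by Theorem \ref{main} and monotonicity equals $5/2$ on all of $[5,8) \supset [5,25/4]$.
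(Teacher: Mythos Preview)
Your proof is correct and follows essentially the same approach as the paper: the upper bound comes from inequality (\ref{one}) together with $c_B(a)=\tfrac{5}{2}$ on $[5,\tfrac{25}{4}]$, and the lower bound comes from the equality case $a_2=5$ in Theorem \ref{mainn} plus monotonicity of $f$ in $a_2$. Your explicit permutation reduction to $a_2=\min(a_2,\dots,a_n)$ is a detail the paper leaves implicit.
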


Next we see that if $a_2 = b_n$ for some $n$ then Theorem \ref{mainn} reproduces precisely the bound (\ref{one}) coming from Theorem \ref{cfn}. In other words, for these values of $a_2$ our folding construction gives exactly the same result as the product map above stabilizing a $4$-dimensional embedding. These embeddings are optimal if $n=0,1$ by the second part of Theorem \ref{mainn}, and so one might naturally conjecture the following.

\begin{conjecture}\label{cj} Suppose $a_3, \dots ,a_n \ge a_2=b_n$. Then the product embedding $E(1, b_n, a_3, \dots, a_n) \hookrightarrow B^4(\frac{g_{n+2}}{g_{n+1}}) \times \CC^{n-2}$ is optimal, that is, $f(b_n, a_3, \dots ,a_n)=c_B(b_n)$.
\end{conjecture}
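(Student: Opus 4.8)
The upper bound $f(b_n, a_3, \dots, a_n) \le c_B(b_n) = \frac{g_{n+2}}{g_{n+1}}$ is already in hand: it is the stabilization inequality (\ref{one}) together with Theorem \ref{cfn}. So the whole content of the conjecture is the matching lower bound, that there is no symplectic embedding $E(1, b_n, a_3, \dots, a_n) \to B^4(R) \times \CC^{n-2}$ with $R < \frac{g_{n+2}}{g_{n+1}}$. The plan is to push the obstruction mechanism of Section \ref{obstr} --- the quantitative refinement of the curve obstructions of \cite{hindker} --- from the arithmetic-progression values $a_2 = 3d-1$ handled by Theorem \ref{main} to all of the outer-corner values $a_2 = b_n$ of the McDuff--Schlenk staircase.

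The first step is to pin down, for each $n$, the punctured holomorphic curve in dimension $4$ that forces sharpness of $c_B(b_n)$. By \cite{mcdsch} this sharpness is witnessed by an exceptional class in an iterated blow-up of $\CC P^2$ (equivalently by the relevant ECH obstruction), and unwinding the ball-packing correspondence presents it as an index-$0$ genus-$0$ curve in the completed cobordism from a thin ellipsoid $E(1, b_n - \epsilon)$ to $B^4(R)$, with negative end on the short Reeb orbit of $\partial E(1, b_n)$ and positive ends whose multiplicities are dictated by the Fibonacci data; for $R < \frac{g_{n+2}}{g_{n+1}}$ the existence of such a curve violates the action balance. For $b_0 = 5$ and $b_1 = \frac{13}{2}$ this just re-packages the curves already used (implicitly) in Theorem \ref{main}, so the genuinely new input concerns $b_n$ for $n \ge 2$.

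The decisive step is to show that this curve \emph{stabilizes}. Starting from a hypothetical embedding into $B^4(R) \times \CC^{n-2}$ with $R < \frac{g_{n+2}}{g_{n+1}}$, one neck-stretches along the boundary of a scaled copy of $E(1, b_n) \times \CC^{n-2}$ and studies the resulting SFT building. One needs: (i) compactness, producing the limit building; (ii) an index count showing that some component of the building projects in the first $\CC^2$ factor onto the dimension-$4$ obstructing curve of step one --- which requires controlling how the Conley--Zehnder and Fredholm indices shift under the product with $\CC^{n-2}$; and (iii) a positivity-of-area argument showing that the components carrying the extra $\CC^{n-2}$ directions are trivial cylinders, so that all the symplectic area is still recorded in the $\CC^2$ factor and the $4$-dimensional contradiction survives intact.

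The main obstacle is step (ii). The stabilized moduli spaces are not cut out transversally in general, and for a general Fibonacci value $b_n$ the obstructing curve has enough ends (and acquires enough genus under the relevant degenerations) that one must rule out bubbling and sphere components that could absorb homology or area. For the values $a_2 = 3d-1$ the curve is simple enough that a direct construction plus automatic transversality suffices --- this is why Theorem \ref{main} goes through --- but for the remaining $b_n$ one would need either a super-rigidity statement for these dimension-$4$ curves inside the product cobordism or an obstruction-bundle and gluing analysis of the kind developed for stabilizing the staircase curves of \cite{mcdsch}. Once the surviving curve is produced, the area bookkeeping in (iii) is routine and gives $f(b_n, a_3, \dots, a_n) = c_B(b_n)$.
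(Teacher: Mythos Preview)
There is nothing to compare against: in the paper this statement is explicitly labelled a \emph{Conjecture} and no proof is given. The paper only remarks that the cases $n=0,1$ follow from the sharpness statement in Theorem~\ref{mainn} (since $b_0=2=3\cdot 1-1$ and $b_1=5=3\cdot 2-1$ under the intended indexing $g_0=1,\,g_1=1,\,g_2=2,\,g_3=5,\dots$), and then states the general case as open. So your proposal is not a reproof of something the paper establishes; it is an outline of how one might try to \emph{prove} an open conjecture.

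As an outline it is reasonable, and you are honest about where the real difficulty lies: step~(ii), the stabilization of the $4$-dimensional obstructing curve, is exactly the missing ingredient. You correctly note that for $a_2=3d-1$ the relevant curve is a degree-$d$ finite-energy plane with a single negative end on $(3d-1)\gamma$ (this is Theorem~\ref{fep}), which is simple enough that the analysis in \cite{hindker} handles it directly. For general $b_n$ with $n\ge 2$ the McDuff--Schlenk obstruction is carried by more complicated exceptional classes, and the statement that the corresponding curves survive stabilization (with the index/transversality control you sketch) is not proved here and is not a routine extension of Section~\ref{obstr}. Your own paragraph beginning ``The main obstacle is step~(ii)'' says as much. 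So what you have written is a research strategy with an identified gap, not a proof.

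One minor correction: your values $b_0=5$, $b_1=\tfrac{13}{2}$ come from reading the sequence as $g_0=1,\,g_1=2,\,g_2=5,\dots$; but the paper's assertion that the conjecture is known for $n=0,1$ via Theorem~\ref{mainn} only makes sense with the indexing $g_0=1,\,g_1=1,\,g_2=2,\,g_3=5,\dots$, giving $b_0=2$ and $b_1=5$. This does not affect your outline, but it does mean that the first genuinely open case is $b_2=\tfrac{13}{2}$, not $b_2$ in your numbering.
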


In general, examining the McDuff-Schlenk function $c_B(a)$ in more detail, we see that the product embedding gives a better result than the construction of Theorem \ref{mainn} when $a_2 < \tau^4$ and $a_2 \neq b_n$ for all $n$. Our construction strictly improves on the product map when $a_2 > \tau^4$.

There is a similar story for embeddings into products of a cube and Euclidean space. Define $$g(a_2, \dots ,a_n) = \inf \{R|E(1,a_2, \dots, a_n) \hookrightarrow P(R,R) \times \CC^{n-2}\}.$$ Then Theorem \ref{main2} can be stated as follows.

\begin{theorem}\label{mainn2} $$g(a_2, \dots ,a_n) \le \frac{2a_2}{a_2+1}.$$ Moreover, we have equality in the case when $a_2=2d+1$ for a positive integer $d$ and $a_3, \dots ,a_n \ge a_2$.
\end{theorem}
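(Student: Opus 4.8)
This statement is a reformulation of Theorem \ref{main2} in the language of the capacity function $g$, so the proof is a direct deduction from that theorem. The upper bound $g(a_2,\dots,a_n)\le\frac{2a_2}{a_2+1}$ is immediate: Theorem \ref{main2} gives $E(1,a_2,\dots,a_n)\hookrightarrow P(\frac{2a_2}{a_2+1},\frac{2a_2}{a_2+1})\times\CC^{n-2}$, which by the definition of $\hookrightarrow$ and of $g$ as an infimum is exactly this bound.

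For the equality when $a_2=2d+1$ and $a_3,\dots,a_n\ge a_2$, I would argue by contradiction. If $g(a_2,\dots,a_n)<\frac{2a_2}{a_2+1}$, choose $R$ with $g(a_2,\dots,a_n)<R<\frac{2a_2}{a_2+1}$; then $E(1,a_2,\dots,a_n)\hookrightarrow P(R,R)\times\CC^{n-2}$, so for every small $\epsilon>0$ there is a symplectic embedding of $(1-\epsilon)E(1,a_2,\dots,a_n)$ into $P(R,R)\times\CC^{n-2}$, equivalently, after the conformal rescaling of the domain, a symplectic embedding of $E(1,a_2,\dots,a_n)$ into $P(\tfrac{R}{1-\epsilon},\tfrac{R}{1-\epsilon})\times\CC^{n-2}$. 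Taking $\epsilon$ small enough that $\tfrac{R}{1-\epsilon}<\tfrac{2a_2}{a_2+1}$ contradicts the obstruction half of Theorem \ref{main2}. Hence $g(a_2,\dots,a_n)\ge\frac{2a_2}{a_2+1}$, and with the upper bound we get equality. The condition $a_3,\dots,a_n\ge a_2$ is used only through the obstruction; monotonicity of $g$ in each $a_j$ further lets one reduce to $a_3=\dots=a_n=a_2$ if desired.

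The substance therefore lies entirely in Theorem \ref{main2}, whose two halves are proved independently. For the embedding I would adapt the multiple symplectic folding of \cite{schl} to a cube target, using the $\CC^{n-2}$ directions as the storage room needed to perform about $d$ folds when $a_2$ is near $2d+1$; the bookkeeping is constrained by two consistency checks, namely that letting $a_3,\dots,a_n\to\infty$ must degenerate to the $P(2,2)$ embedding behind Theorem \ref{emb1}, and that for $a_2$ a ratio of Pell numbers it must reproduce the stabilized Frenkel--M\"uller embedding. For the obstruction I would use the quantitative refinement of the \cite{hindker} obstructions (the cube analogue of Theorem \ref{gopt}): force a punctured holomorphic curve in the relevant completed cobordism with prescribed Reeb asymptotics on the two boundary components and extract the bound $R\ge\frac{2a_2}{a_2+1}$ from its action balance, with the integrality $a_2=2d+1$ pinning the index of the relevant orbit so that the inequality is sharp rather than off by a rounding term. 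The step I expect to be hardest is this obstruction: stabilizing by $\CC^{n-2}$ removes the four-dimensional positivity-of-intersections tools, so nonemptiness and compactness of the moduli space must come from the finer index estimates of \cite{hindker}, and the whole theorem hinges on the construction and the obstruction meeting at exactly $\frac{2a_2}{a_2+1}$ with no slack.
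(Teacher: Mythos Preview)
Your proposal is correct and matches the paper's approach: Theorem \ref{mainn2} is explicitly presented in the paper as a restatement of Theorem \ref{main2}, so the deduction you give (upper bound from the embedding, equality from the obstruction via a rescaling) is exactly what is intended, and your sketch of the two halves of Theorem \ref{main2} aligns with the paper's folding construction in Section \ref{constr} and the finite energy plane obstruction (Theorem \ref{fep2} plus Stokes) in Section \ref{obstr}. One small inaccuracy: in the obstruction the polydisk end is compactified to $S^2(R_1)\times S^2(R_2)$, so the finite energy plane has a single negative puncture asymptotic to $(2d+1)\gamma$ on $\partial E$, not asymptotics on two boundary components.
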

A consequence is that if $a_3, \dots ,a_n \ge a_2 \ge 3$ then $$g(a_2, \dots ,a_n) \ge 2 - \frac{1}{\lfloor (a_2-1)/2 \rfloor +1}.$$
In this case the Ekeland-Hofer capacities imply only that $g(a_2, \dots ,a_n) \ge \min(1,a_2, \dots ,a_n)$, which we know already from Gromov's Theorem \ref{ns}.

The corresponding $4$-dimensional embedding capacity is $$c_P(a) = \inf \{R|E(1,a) \hookrightarrow P(R,R) \}$$ and this was worked out by Frenkel and M\"{u}ller in \cite{frmu}.

To describe a connection to Theorem \ref{mainn2} we must introduce the sequences of Pell numbers $\{P_n\}_{n=0}^{\infty}$ and half companion Pell numbers $\{H_n\}_{n=0}^{\infty}$. These are defined by the recursion relations $$P_0=0, P_1=1, \quad P_n=2P_{n-1}+P_{n-2},$$ $$H_0=1, H_1=1, \quad H_n=2H_{n-1}+H_{n-2}.$$ Next we define a sequence $\{\beta_n\}_{n=0}^{\infty}$ by
\[\beta_n = \left\{ \begin{array}{ll} \frac{H_{n+2}}{H_n} & \mbox{if $n$ is even} \\
\frac{P_{n+2}}{P_n} & \mbox{if $n$ is odd}
\end{array}
\right. \]
We have $\lim_{n \to \infty} \beta_n = \sigma^2$. A part of Theorem $1.3$ of \cite{frmu}, together with some manipulations of the recursion relations for Pell and half companion Pell numbers, gives the following.

\begin{theorem}\label{cfn2}(Frenkel-M\"{u}ller, \cite{frmu}, Theorem $1.3$) For all $n \ge 0$, $c_P(\beta_n) = \frac{2\beta_n}{\beta_n+1}$.
\end{theorem}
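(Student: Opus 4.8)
The plan is to quote the explicit description of the function $c_P$ from \cite{frmu} and then check, by elementary manipulations of the Pell recursions, that the points $a = \beta_n$ are exactly the ``outer'' corners of its staircase and that the height of $c_P$ there is the claimed value $\frac{2\beta_n}{\beta_n+1}$. Recall that Theorem~$1.3$ of \cite{frmu} determines $c_P(a)$ for all $a\ge 1$; on the interval $[1,\sigma^2]$ the graph of $c_P$ is an infinite piecewise linear ``Pell staircase'' converging to $(\sigma^2, c_P(\sigma^2))$, and the theorem records the coordinates of all its corners in terms of Pell and half companion Pell numbers. What matters for us is the subsequence of corners at which one slanted segment of the staircase meets the next; the first step of the argument is to identify the first coordinates of these corners with the $\beta_n$ and their second coordinates with the asserted values.

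First I would write down, from \cite{frmu}, the corner of the staircase that we claim is $(\beta_n, c_P(\beta_n))$, keeping track of the parity of $n$: unlike the Fibonacci staircase for the ball, the cube staircase is assembled from two interleaved families, one governed by the Pell numbers $\{P_n\}$ and one by the half companion Pell numbers $\{H_n\}$, and this is the source of the even/odd split in the definition of $\beta_n$. Using the identity $H_n = P_n + P_{n-1}$ (valid for $n\ge 1$), the recursions $P_{n+2}=2P_{n+1}+P_n$ and $H_{n+2}=2H_{n+1}+H_n$, and the consequence $H_{n+1}+H_n = 2P_{n+1}$, one rewrites the corner coordinates from \cite{frmu} so that the first coordinate becomes $H_{n+2}/H_n$ for $n$ even and $P_{n+2}/P_n$ for $n$ odd — that is, $\beta_n$ — while the second coordinate becomes $\frac{H_{n+2}}{H_{n+1}+H_n}$, resp.\ $\frac{P_{n+2}}{P_{n+1}+P_n}$.

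It then remains only to recognise this second coordinate as $\frac{2\beta_n}{\beta_n+1}$. For $n$ even, $\beta_n+1 = \frac{H_{n+2}+H_n}{H_n}$, so, using $H_{n+2}+H_n = 2H_{n+1}+2H_n$,
$$\frac{2\beta_n}{\beta_n+1} \;=\; \frac{2H_{n+2}}{H_{n+2}+H_n} \;=\; \frac{2H_{n+2}}{2(H_{n+1}+H_n)} \;=\; \frac{H_{n+2}}{H_{n+1}+H_n},$$
which is exactly the value found above; the computation for $n$ odd is verbatim the same with every $H$ replaced by $P$ and the recursion $P_{n+2}=2P_{n+1}+P_n$ in place of $H_{n+2}=2H_{n+1}+H_n$. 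Together with the matching of coordinates this yields $c_P(\beta_n)=\frac{2\beta_n}{\beta_n+1}$ for all $n\ge 0$.

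The substance of the proof is entirely in the bookkeeping of the previous paragraph: one must be careful that the $\beta_n$ are genuinely \emph{corners} of the staircase of \cite{frmu} and not interior points of its linear segments — on a segment of slope $s$ through such a corner $(\beta_n,v_n)$ one has $c_P(a)=v_n+s(a-\beta_n)$, which does not equal $\frac{2a}{a+1}$ for $a\ne\beta_n$ — and that the alternation in the definition of $\beta_n$ is the same alternation that appears in \cite{frmu}. The initial cases are read off directly from the first steps of that staircase: $\beta_0=3$, where the assertion $c_P(3)=\tfrac32$ encodes a nontrivial embedding $E(1,3)\hookrightarrow P(\tfrac32,\tfrac32)$, and $\beta_1=5$, with $c_P(5)=\tfrac53$.
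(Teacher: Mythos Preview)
Your proposal is correct and is exactly what the paper intends: the paper does not give a proof of this statement but simply says it follows from ``A part of Theorem $1.3$ of \cite{frmu}, together with some manipulations of the recursion relations for Pell and half companion Pell numbers,'' and you have carried out precisely those manipulations. The identities you use ($H_n=P_n+P_{n-1}$, $H_{n+2}+H_n=2(H_{n+1}+H_n)$, and likewise for $P$) are correct, and the identification of the $\beta_n$ with the relevant corners of the Frenkel--M\"uller staircase is the right step.
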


Hence when $a_2=\beta_n$ our embedding result matches the embedding given by stabilizing a $4$-dimensional embedding. By the second statement of Theorem \ref{mainn2} these embeddings are optimal if $n=0,1$ and it is natural to conjecture that they are optimal for all $n$.

\begin{conjecture}\label{cj2} Suppose $a_3, \dots ,a_n \ge a_2=\beta_n$. Then the product embedding $E(1, \beta_n, a_3, \dots, a_n) \hookrightarrow P(\frac{2\beta_n}{\beta_n+1}, \frac{2\beta_n}{\beta_n+1}) \times \CC^{n-2}$ is optimal, that is $g(\beta_n, a_3, \dots ,a_n)=c_P(\beta_n)$.
\end{conjecture}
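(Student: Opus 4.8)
We sketch the approach we would take. The upper bound $g(\beta_n,a_3,\dots,a_n)\le c_P(\beta_n)$ is already available, since $c_P(\beta_n)=\frac{2\beta_n}{\beta_n+1}$ by Theorem \ref{cfn2} while Theorem \ref{mainn2} gives $g(\beta_n,a_3,\dots,a_n)\le\frac{2\beta_n}{\beta_n+1}$ (this bound also comes from the product map stabilizing an optimal $4$-dimensional embedding into a cube). So the entire content of the conjecture is the reverse inequality $g(\beta_n,a_3,\dots,a_n)\ge c_P(\beta_n)$. When $\beta_n$ happens to be an odd integer $2d+1$ --- as for $\beta_0=H_2/H_0=3$ and $\beta_1=P_3/P_1=5$ --- this is already the optimality clause of Theorem \ref{mainn2}; the task is to produce an obstruction valid for \emph{every} $\beta_n$, tuned to the Frenkel--M\"{u}ller class that accounts for the value $c_P(\beta_n)$ in dimension $4$.

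The main step is a neck-stretching argument inside the product, in the spirit of \cite{hindker}. Suppose, for contradiction, that for some small $\epsilon>0$ there is a symplectic embedding $(1-\epsilon)E(1,\beta_n,a_3,\dots,a_n)\to P(R,R)\times\CC^{n-2}$ with $R<\frac{2\beta_n}{\beta_n+1}$, and write $E$ for the image ellipsoid. Working inside a compact part of the target that contains $E$, choose an almost complex structure adapted to $E$ and stretch the neck along $\partial E$. A compactness argument then produces, in the limit, a punctured holomorphic building whose top level lives in the symplectization of $\partial E$ and whose bottom level lives in the completed complement of $E$. We then wish to isolate a distinguished genus-$0$ component of this building whose positive ends are asymptotic to the short Reeb orbits on $\partial E$ with the total multiplicities prescribed by the Pell-number combinatorics underlying $c_P(\beta_n)$: precisely the asymptotic profile of the $4$-dimensional obstructive curve behind Theorem \ref{cfn2}, whose existence in dimension $4$ follows from a nonvanishing Gromov--Taubes count (equivalently, from the embedded contact homology obstruction).

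The decisive point is that this $4$-dimensional obstructive curve should persist in the presence of the trivial $\CC^{n-2}$ factor. Since $P(R,R)\times\CC^{n-2}$ splits as a product and the stabilizing directions support no nonconstant Reeb dynamics, we expect a compactness and intersection-theory argument to force the limiting building to contain a component which, after projection to the bidisk factor, is the known $4$-dimensional curve, possibly glued to trivial cylinders over the $\CC^{n-2}$ directions, with no symplectic area or asymptotic charge escaping into the extra factor. Given such a component, the area/action identity for it yields a linear inequality relating $R$ to the actions of its ends on $\partial E$, and the same manipulation of the Pell and half companion Pell recursions used to deduce Theorems \ref{cfn2} and \ref{mainn2} turns this into $R\ge\frac{2\beta_n}{\beta_n+1}$, contradicting the choice of $R$.

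The main obstacle is precisely this persistence step, because there is no embedded contact homology in dimension $2n\ge 6$ to supply the obstructive curve directly. We must instead either build the higher-dimensional curve by a gluing or implicit-function argument starting from the $4$-dimensional one --- which requires control of the linearized Cauchy--Riemann operator at the multiply covered short Reeb orbits on $\partial E$ --- or run the neck-stretch and rule out every competing degeneration: components escaping to infinity in the $\CC^{n-2}$ factor, extra splitting at the ellipsoid boundary, and genus-raising bubbling. Controlling the multiple covers and the asymptotics at the short orbits is exactly what makes the optimality clause of Theorem \ref{mainn2} work for $a_2=2d+1$; extending it to a general $\beta_n$ means handling the longer and more delicate staircase combinatorics of the Frenkel--M\"{u}ller class, and we expect this, rather than any soft point, to be where the real difficulty lies.
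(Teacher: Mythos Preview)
The paper does not prove this statement: it is explicitly stated as a \emph{conjecture} and left open. The only cases the paper actually establishes are $n=0$ and $n=1$, where $\beta_0=3$ and $\beta_1=5$ are odd integers of the form $2d+1$ and so are covered by the optimality clause of Theorem~\ref{mainn2} (equivalently, by Theorem~\ref{fep2}). For $n\ge 2$ the paper offers no argument beyond the remark that the conjecture is ``natural''.

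Your proposal is not a proof either, and to your credit you say so: you outline a plausible strategy and then correctly isolate the genuine obstruction. The upper bound you record is indeed immediate from Theorems~\ref{cfn2} and~\ref{mainn2}. For the lower bound, the neck-stretching framework you describe is the one used in \cite{hindker} to prove Theorems~\ref{fep} and~\ref{fep2}, but those results only produce the required finite energy plane when the asymptotic multiplicity matches the integer sequences $3d-1$ or $2d+1$. The Frenkel--M\"{u}ller staircase values $\beta_n$ for $n\ge 2$ are not of this form, so the existing existence results do not apply, and your ``persistence step'' --- producing in dimension $2n\ge 6$ the analogue of the $4$-dimensional ECH/Seiberg--Witten obstructive curve --- is exactly the missing ingredient. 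There is currently no known mechanism (no higher-dimensional ECH, and no gluing theorem of the type you gesture at that handles the relevant multiply covered orbits) to supply that curve. So your sketch identifies the right target and the right difficulty, but does not resolve it; this is consistent with the paper, which leaves the statement as a conjecture.
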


In general, we see that the product embedding gives a better result than the construction of Theorem \ref{mainn2} when $a_2 < \sigma^2$ and $a_2 \neq \beta_n$ for all $n$. Our construction strictly improves on the product map when $a_2 > \sigma^2$.

\end{subsection}

\end{section}

\begin{section}{Main construction}\label{constr}

Let $S,T \ge 1$. Reordering the first and second factors the goal of this section is to produce an embedding $$E(S,1,T, \dots ,T) \hookrightarrow \left( B^4 \left( \frac{3S}{S+1} \right) \cap P \left( \frac{2S}{S+1}, \frac{2S}{S+1} \right) \right) \times \CC^{n-2}.$$ In fact, choosing an embedding to fix the $z_4, \dots ,z_n$ coordinates, it suffices to work in dimension $6$ and construct an embedding of the ellipsoid $E(S,1,T)$. By definition, we need to symplectically embed $E(S,1,T)$ in an arbitrarily small neighborhood of $\left( B^4 \left( \frac{3S}{S+1} \right) \cap P \left( \frac{2S}{S+1}, \frac{2S}{S+1} \right) \right) \times \CC$. To simplify things then, when we write an embedding we will always mean an embedding into an arbitrarily small neighborhood, and subsets will mean subsets of arbitrarily small neighborhoods, even if we do not say so explicitly. Similarly we will let $\epsilon$ denote a parameter which can be chosen to be arbitrarily small.

We begin by recalling some terminology for Hamiltonian diffeomorphisms. A compactly supported function $H:[0,1] \times \CC^n \to \RR$ will be called a Hamiltonian and sometimes written as $(t,z) \mapsto H^t(z)$. We can define the norm of $H$ by $|H| = \int_0^1 (\max H^t - \min H^t) dt$. For example, in the case when $H$ is independent of time (which will usually be the case for us) we have $|H| = \max H - \min H$. Associated to $H$ is a time dependent vector field $i \nabla H^t$ where $\nabla H^t$ denotes the usual gradient of a function on Euclidean space. The corresponding flow exists for all time since $H$ is compactly supported and we write $\phi^t$ for the time $t$ flow. Then $\phi = \phi^1$ is called the Hamiltonian diffeomorphism generated by $H$. The Hofer norm of $\phi$ is the infimum of $|H|$ over all $H$ which generate $\phi$.

{\bf Notation.}

We set $\lambda = \frac{S}{S+1}$. As $S \ge 1$ we have $\frac{1}{2} \le \lambda \le 1$. Then let $V$ be the subset of the $z_1$-plane given by $$V = \left( [0,1] \times [0,\lambda] \right) \bigcup \left( [1,2N+1] \times \{0\} \right).$$ Here $N$ is an integer of order $\frac{S}{\epsilon}$. There exists a symplectomorphism $\psi$ from $D(S)$, the disk of area $S$, to an $\epsilon$-neighborhood of $V$. We can choose this symplectomorphism such that the disk $D(\lambda) \subset D(S)$ is mapped to a neighborhood of the square $[0,1] \times [0,\lambda]$.

Let $D_1=D(\lambda)$ and $D_2$ be another disk of area $\lambda$ lying in the annulus $D(2(\lambda+\epsilon)) \setminus D(\lambda + \epsilon)$. We think of $D_1$ and $D_2$ as subsets of the $z_2$-plane.

Next let $A_i$ denote the annulus $A_i = D(i(T+\epsilon)) \setminus D((i-1)(T+\epsilon))$, which we think of as a subset of the $z_3$-plane. Let $B_1=D(T)$ and in general let $B_i$ be a disk of area $T$ lying inside $A_i$.

Now we can define the bidisks $P_i$ in the $(z_2,z_3)$-plane by $P_i = D_1 \times B_i$ if $i$ is odd, and $P_i = D_2 \times B_i$ if $i$ is even.

\begin{figure}
\centerline{\includegraphics{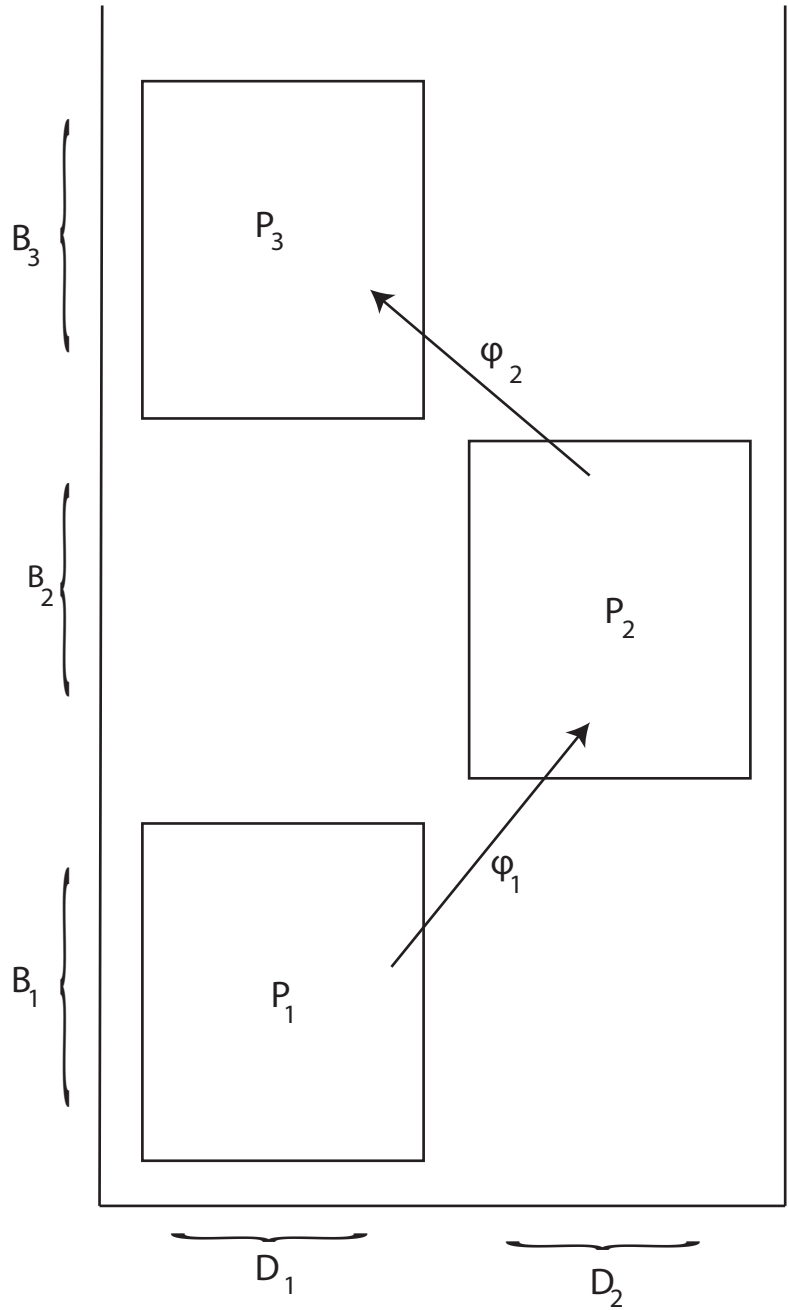}}
  \caption{Polydisks $P_i$ and diffeomorphisms $\phi_i$.}
  \label{thinfig1}
\end{figure}

The first few polydisks $P_i$ are illustrated in Figure \ref{thinfig1}, and the diffeomorphisms $\phi_i$ are described in the following lemma.

\begin{lemma} \label{polymove} There exist Hamiltonian diffeomorphisms $\phi_i$ of the $(z_2,z_3)$ plane, generated by time-independent, compactly supported, Hamiltonian functions $G_i$ of norm bounded by $\lambda + \epsilon$, such that $\phi_i(P_i) = P_{i+1}$.

Moreover, for $i$ odd and $0 \le t \le 1$, we have $$\phi^t_i(P_i) \subset D((1+t)(\lambda + \epsilon)) \times (A_i \cup A_{i+1}).$$ For $i$ even we have $$\phi^t_i(P_i) \subset D((2-t)(\lambda + \epsilon)) \times (A_i \cup A_{i+1}).$$
\end{lemma}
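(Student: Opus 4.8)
The plan is to construct the $\phi_i$ explicitly, carrying out the case $i$ odd in detail; the case $i$ even is entirely analogous, the only change being that the $z_2$--component of the moving polydisk travels from $D_2$ back into $D_1$ (so the factor $1+t$ in the trajectory bound becomes $2-t$), while the $z_3$--component still advances from $A_i$ to $A_{i+1}$. Before starting it is worth noting why $\lambda+\epsilon$ is the natural bound. Since $D_1\cap D_2=\emptyset$, the map $\phi_i$ displaces $P_i$ off itself in $\CC^2$; and $P_i=D_1\times B_i$ is symplectomorphic to the polydisk $P(\lambda,T)$ with $\lambda<1\le T$, whose displacement energy in $\CC^2$ equals $\min(\lambda,T)=\lambda$. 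Hence $|G_i|\ge\lambda$ for any admissible $G_i$, and the lemma is really asking for a displacing Hamiltonian isotopy that is essentially energy--optimal and that moreover lands $P_i$ precisely on $P_{i+1}$. Since the obstruction to a cheaper displacement lies entirely in the $z_3$--direction (a disk of area $T$ has displacement energy $T$ in $\CC_{z_3}$ alone), the construction must exploit the thin $z_2$--factor as the direction in which the polydisk ``escapes'', and the freedom we have is to choose $D_2$, $B_i$ and $B_{i+1}$ in a convenient position.

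So I would build $G_i$ so that its time--one flow slides $P_i$ along a path of polydisks to $P_{i+1}$, with the $z_2$--footprint growing monotonically from $D(\lambda+\epsilon)\supset D_1$ at $t=0$ to $D(2(\lambda+\epsilon))\supset D_2$ at $t=1$ and the $z_3$--footprint drifting from $B_i\subset A_i$ to $B_{i+1}\subset A_{i+1}$ inside $A_i\cup A_{i+1}$, the two motions being \emph{coupled} rather than performed independently in the two planes: carried out independently, the $z_3$--motion alone would already cost of order $T$, whereas slaving it to a motion that uses up the annular room $D(2(\lambda+\epsilon))\setminus D(\lambda+\epsilon)$ of area $\lambda+\epsilon$ in the $z_2$--plane realizes the whole move by a single shear whose generating function oscillates only by that room. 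This is the mechanism of multiple symplectic folding \cite{schl}: the transit region in the $z_2$--plane is used to fold the $z_3$--part of the polydisk over from $A_i$ to $A_{i+1}$, after which it is dropped over $D_2$. Concretely one chooses $B_i,B_{i+1}$ in standard position (for $i\ge 2$ as ``cut--annulus'' strips filling all but an $\epsilon$ of $A_i$, $A_{i+1}$) and $D_2$ as a thin flat disk hugging an arc of $D(2(\lambda+\epsilon))\setminus D(\lambda+\epsilon)$, so that the shear is essentially one--dimensional; then $G_i$ is taken to be a smoothing of a function whose non--constant part is exactly the generating function of this shear, slaved to the $z_3$--transit so that the transit --- long though it is in the $z_3$--plane --- stays within a thin band of level sets of $G_i$ and so does not enlarge $\max G_i-\min G_i$ past $\lambda+\epsilon$; away from a compact set $G_i$ is constant.

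It then remains to check the two assertions of the lemma. The trajectory inclusion $\phi_i^t(P_i)\subset D((1+t)(\lambda+\epsilon))\times(A_i\cup A_{i+1})$ is built into the construction: the $z_3$--component stays in $A_i\cup A_{i+1}$ by design of the transit, and the $z_2$--component is arranged to be the disk of area $(1+t)(\lambda+\epsilon)$ at time $t$. One also checks that $G_i$ can be taken time--independent, by realizing the $z_3$--transit as the flow of an autonomous Hamiltonian on $\CC_{z_3}$ and coupling the $z_2$--shear to its value rather than to the time parameter; compact support and smoothness are then automatic. The one genuinely nontrivial point --- and the one I expect to be the main obstacle --- is the energy estimate $\max G_i-\min G_i\le\lambda+\epsilon$: it is here that the normal forms for $D_2$, $B_i$, $B_{i+1}$ and the precise way the $z_2$--shear is slaved to the $z_3$--transit have to be pinned down so that the generating function takes values in an interval of length at most $\lambda+\epsilon$, and it is the only place where the specific value $\lambda=\frac{S}{S+1}$, rather than merely some constant, is forced.
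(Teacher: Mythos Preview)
Your proposal correctly identifies the heart of the matter --- that the $z_3$--displacement alone would cost $\sim T$, so the $z_2$-- and $z_3$--motions must be coupled --- but it never actually constructs $G_i$, and in particular never closes the energy estimate you yourself flag as the main obstacle. Phrases like ``slaving the shear to the $z_3$--transit so that the transit\ldots stays within a thin band of level sets of $G_i$'' are descriptions of a desired outcome, not a mechanism that produces it. As written there is no argument that such a $G_i$ exists with $\max G_i - \min G_i \le \lambda+\epsilon$.

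The paper's proof supplies exactly the missing mechanism, and it is quite different from the shear--and--normal--form picture you sketch. One takes an arbitrary autonomous $\psi_2$ on $\CC_{z_2}$ with $\psi_2(D_1)=D_2$, supported in $D(2(\lambda+\epsilon))$ and of norm $\le\lambda+\epsilon$ (this is just planar disk displacement), and an arbitrary autonomous $\psi_3$ on $\CC_{z_3}$ with $\psi_3(B_i)=B_{i+1}$, supported in $A_i\cup A_{i+1}$ and of whatever norm it has. Then one cuts off $H_3$ by a function $\chi(\pi|z_2|^2)$ that vanishes on $D(\lambda)$ and equals $1$ outside $D(\lambda+\epsilon)$, obtaining $\tilde\psi_3$, and sets $\phi_i=\tilde\psi_3\circ\psi_2\circ\tilde\psi_3^{-1}$. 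The point is that conjugation gives $G_i=H_2\circ\tilde\psi_3^{-1}$, so $|G_i|=|H_2|\le\lambda+\epsilon$ and the (possibly huge) norm of $H_3$ never appears. The trajectory bound then follows because $\tilde\psi_3$ preserves $\pi|z_2|^2$, so the $z_2$--footprint of $\phi_i^t(P_i)$ is governed by $\psi_2^t(D_1)\subset D((1+t)(\lambda+\epsilon))$. No special positioning of $D_2$, $B_i$, $B_{i+1}$ is needed beyond the inclusions already stipulated, and the specific value $\lambda=\frac{S}{S+1}$ plays no role in this lemma --- the bound $\lambda+\epsilon$ is simply the area of $D_1$ plus $\epsilon$.
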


\begin{proof} We prove existence for the $\phi_i$ when $i$ is odd, the even case is similar.

There exists a Hamiltonian diffeomorphism $\psi_2$ of the $z_2$-plane of norm bounded by $\lambda + \epsilon$ such that the corresponding Hamiltonian is supported in $D(2(\lambda+\epsilon))$ and $\psi_2(D_1)=D_2$. It is not hard to produce a flow such that we have $\psi^t_2(D_1) \subset D((1+t)(\lambda + \epsilon))$. By abuse of notation let us also write $\psi_2$ for the Hamiltonian diffeomorphism $\psi_2 \times \mathrm{id.}$ of the $(z_2,z_3)$-plane.

Also, there exists a Hamiltonian diffeomorphism $\psi_3$ of the $z_3$-plane such that the corresponding Hamiltonian is supported in $A_i \cup A_{i+1}$ and $\psi_3(B_i)=B_{i+1}$. We may assume that both $\psi_2$ and $\psi_3$ are generated by time-independent Hamiltonian functions, say $H_2$ and $H_3$ respectively.

Let $\chi$ be a cut-off function with $\chi(x)=0$ when $x \le \lambda$ and $\chi(x)=1$ when $x \ge \lambda + \epsilon$. Then consider the Hamiltonian function $\chi(\pi|z_2|^2)H_3(z_3)$ on the $(z_2,z_3)$-plane. This generates a diffeomorphism $\tilde{\psi_3}$ which is the identity when restricted to $D(\lambda) \times \CC$ (where $\chi(\pi|z_2|^2)$ is identically $0$) but with $\tilde{\psi_3}(z_2,z_3)=(z_2, \psi_3(z_3))$ when $\pi|z_2|^2 \ge \lambda + \epsilon$.

Hence, since $D_1 = D(\lambda)$ and $D_2 \subset D(2(\lambda+\epsilon)) \setminus D(\lambda + \epsilon)$, the composition $\phi_i= \tilde{\psi_3} \circ \psi_2 \circ \tilde{\psi_3}^{-1}$ maps $P_i$ to $P_{i+1}$ and it remains to check that it has the required properties.

The diffeomorphism $\phi_i$ is the time-$1$ map of a Hamiltonian flow $\tilde{\psi_3} \circ \psi^t_2 \circ \tilde{\psi_3}^{-1}$. This flow is generated by a Hamiltonian function $G_i=H_2 \circ \tilde{\psi_3}^{-1}$, where $H_2$ is a Hamiltonian generating $\psi_2$, see for instance \cite{hoferzehnder} Proposition $1$, page $144$. Therefore we may assume $\phi_i$ has norm bounded by $\lambda + \epsilon$ (the bound on the norm of $H_2$).

Second, since $\psi^t_2(P_i) \subset D((1+t)(\lambda + \epsilon)) \times \CC$, and $\tilde{\psi_3}$ preserves $\pi|z_2|^2$ (since its Hamiltonian commutes with $\pi|z_2|^2$) we also have $\phi^t_i(P_i) \subset D((1+t)(\lambda + \epsilon)) \times \CC$ as required.
\end{proof}

With all of this in place, we can begin the construction.

\newpage

{\bf Step $1$.} {\it Repositioning the domain.}

Recall the symplectomorphism $\psi$ mapping $D(S)$ to a neighborhood of $V$. We apply the map $\phi_0 = \psi \times \mathrm{id.}$ to $E(S,1,T)$ and think of the image $F_0 = \phi_0(E(S,1,T))$ as fibered over (a neighborhood of) $V$ with projection $\pi_1:(z_1,z_2,z_3) \mapsto z_1$.

The fibers of $\pi_1$ are all contained in the bidisk $P(1,T)$. However, note that if $(z_1,z_2,z_3) \in E(S,1,T)$ and $\pi|z_1|^2 > \lambda$, then $\pi|z_2|^2 < 1-\frac{\lambda}{S} = \lambda$. Hence, since $\psi$ maps $D(\lambda)$ to the square $[0,1] \times [0,\lambda] $, the fibers of $\pi_1$ over points in the interval $[1,2N+1] \times \{0\}$ lie inside the smaller bidisk $P(\lambda,T)=P_1$.

{\bf Step $2$.} {\it Displacing fibers.}

Let $\chi_i$ be a cut-off function with $\chi_i(x)=0$ when $x \le 2i$ and $\chi_i(x)=1$ when $x \ge 2i+1$. Further, we may assume $0 \le \chi'_i(x) \le 1+\epsilon$. (Recall throughout that $\epsilon$ is any quantity which can be arbitrarily small.)

Let $G_i$ be the Hamiltonian function, with norm bounded by $\lambda + \epsilon$, generating the diffeomorphism $\phi_i$ described in Lemma \ref{polymove}. Then we can define $\sigma_i$ to be the Hamiltonian diffeomorphism generated by $\chi_i(\mathrm{Re}z_1) G_i(z_2,z_3)$ and set $$F_i = \sigma_i(\sigma_{i-1} \dots (\sigma_1(F_0)) \dots ).$$

The following is a key lemma. Recall that by a subset we really mean a subset of a small neighborhood.

\begin{lemma}\label{image} Let $W_i = [2i, 2i+1] \times [\min G_i, \max G_i] $ be a subset of the $z_1$-plane. Then $$\pi_1(F_N) \subset \left( [0,1] \times [0,\lambda] \right) \bigcup_{i=1}^N \left( [2i-1,2i]\times \{0\} \right) \bigcup_{i=1}^N W_i.$$

The fibers of $F_N$ over $[0,1] \times [0,\lambda]$ lie in $P(1,T)$.

The fibers of $F_N$ over $[2i-1,2i]\times \{0\}$ lie in $P_i$.

The fibers of $F_N$ over $W_i$ lie in $D(2(\lambda + \epsilon)) \times (A_i \cup A_{i+1})$.

More precisely, and as usual up to an error of order $\epsilon$, the fibers of $F_N$ over a point $z_1$ with $\mathrm{Re}(z_1) = 2i+t$ for some $0 \le t \le 1$ lie in $D((1+t)(\lambda + \epsilon)) \times (A_i \cup A_{i+1})$ if $i$ is odd and $D((2-t)(\lambda + \epsilon)) \times (A_i \cup A_{i+1})$ if $i$ is even.
\end{lemma}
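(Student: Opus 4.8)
The plan is to compute each $\sigma_i$ explicitly and then follow the four regions of $F_N$ through the composition $\sigma_N\circ\cdots\circ\sigma_1$ one $\mathrm{Re}\,z_1$-slice at a time. Since the generating Hamiltonian $\chi_i(\mathrm{Re}\,z_1)\,G_i(z_2,z_3)$ of $\sigma_i$ is independent of both $\mathrm{Im}\,z_1$ and time, the flow keeps $x_1:=\mathrm{Re}\,z_1$ constant; for a fixed value of $x_1$ the $(z_2,z_3)$-coordinates then evolve under $\chi_i(x_1)$ times the flow of $G_i$, reaching $\phi_i^{\chi_i(x_1)}$ at time one, and because $G_i$ is constant along its own flow the $y_1$-coordinate is moved by exactly $\chi_i'(x_1)\,G_i(z_2,z_3)$. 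Thus
$$\sigma_i(z_1,z_2,z_3)=\Big(x_1+i\big(y_1+\chi_i'(x_1)\,G_i(z_2,z_3)\big),\ \phi_i^{\chi_i(x_1)}(z_2,z_3)\Big).$$
In particular $\sigma_i$ is the identity on $\{\mathrm{Re}\,z_1\le 2i\}$, where $\chi_i\equiv 0$, and equals $\mathrm{id}\times\phi_i$ on $\{\mathrm{Re}\,z_1\ge 2i+1\}$, where $\chi_i\equiv 1$ and $\chi_i'\equiv 0$; all interesting behaviour is confined to the strips $\{\mathrm{Re}\,z_1\in[2i,2i+1]\}$, which are pairwise disjoint.

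Next I would partition the $\mathrm{Re}\,z_1$-axis into the ``square region'' $\{\mathrm{Re}\,z_1\le 1\}$ together with the intervals $[2i-1,2i]$ and $[2i,2i+1]$ for $1\le i\le N$, which cover $[1,2N+1]$, the $\pi_1$-image of the tail $[1,2N+1]\times\{0\}$ of $V$, and track the fibre over each. Over the square region every $\sigma_j$ is the identity (all the relevant intervals lie in $\{\mathrm{Re}\,z_1\le 2\}$), so this part of $F_0$ is unchanged: by Step 1 its fibre lies in $P(1,T)$ and its projection in $[0,1]\times[0,\lambda]$. Over $[2i-1,2i]\times\{0\}$ one checks $2i-1\ge 2j+1$ for $j\le i-1$ and $2i\le 2j$ for $j\ge i$, so $\sigma_1,\dots,\sigma_{i-1}$ act as $\mathrm{id}\times\phi_1,\dots,\mathrm{id}\times\phi_{i-1}$ and $\sigma_i,\dots,\sigma_N$ are the identity; hence the fibre $P_1$ of $F_0$ (Step 1) is carried successively to $P_2,\dots,P_i$ by Lemma \ref{polymove}, while $y_1$ is never displaced and stays $\approx 0$. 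Over $[2i,2i+1]$ the same maps $\sigma_1,\dots,\sigma_{i-1}$ again carry the fibre to $P_i$, then $\sigma_i$ moves $(z_2,z_3)$ by $\phi_i^{\chi_i(x_1)}$ and moves $y_1$ from $\approx 0$ by $\chi_i'(x_1)\,G_i(z_2,z_3)$, and $\sigma_{i+1},\dots,\sigma_N$ (identity on $\{\mathrm{Re}\,z_1\le 2i+1\}$) leave this unchanged.

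The fibre statements then read off directly. Over $[2i-1,2i]\times\{0\}$ the fibre is $(\phi_{i-1}\circ\cdots\circ\phi_1)(P_1)=P_i$. Over $[2i,2i+1]$ the fibre lies in $\phi_i^{\chi_i(x_1)}(P_i)$, which by Lemma \ref{polymove} sits inside $D((1+\chi_i(x_1))(\lambda+\epsilon))\times(A_i\cup A_{i+1})$ if $i$ is odd and inside $D((2-\chi_i(x_1))(\lambda+\epsilon))\times(A_i\cup A_{i+1})$ if $i$ is even, hence inside $D(2(\lambda+\epsilon))\times(A_i\cup A_{i+1})$; and since $0\le\chi_i'\le 1+\epsilon$ the $y_1$-displacement has size at most $(1+\epsilon)(\max G_i-\min G_i)$, so after normalizing each $G_i$ by an additive constant (which does not affect $\phi_i$) the projection lands in $W_i$ up to an error of order $\epsilon$. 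The final, more precise assertion follows because a nonnegative cutoff rising from $0$ to $1$ on an interval of length $1$ with slope at most $1+\epsilon$ must satisfy $\chi_i(2i+t)=t+O(\epsilon)$ uniformly, so the radius $(1+\chi_i(x_1))(\lambda+\epsilon)$, resp.\ $(2-\chi_i(x_1))(\lambda+\epsilon)$, differs from $(1+t)(\lambda+\epsilon)$, resp.\ $(2-t)(\lambda+\epsilon)$, by only $O(\epsilon)$.

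The part that needs genuine care — rather than being a genuine obstacle — is twofold: getting the $y_1$-displacement right, which hinges on the observation that $G_i$ is conserved along its own flow so the displacement is the honest product $\chi_i'(x_1)G_i(z_2,z_3)$ and not the integral of a varying quantity; and the bookkeeping of precisely which of the $\sigma_j$ acts as the identity, as $\mathrm{id}\times\phi_j$, or through its transition strip on each $\mathrm{Re}\,z_1$-interval, all while carrying along the several $\epsilon$'s (from the $\psi$-neighbourhood, the slightly super-linear $\chi_i$, and the chosen positions of $D_2$ and the $B_i$).
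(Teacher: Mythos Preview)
Your proposal is correct and is essentially the same argument as the paper's proof: both compute the flow of $\chi_i(\mathrm{Re}\,z_1)G_i(z_2,z_3)$, observe that $\mathrm{Re}\,z_1$ is conserved, that the $(z_2,z_3)$-part is $\phi_i^{\chi_i(x_1)}$ with $\chi_i(2i+t)\approx t$, and that the $y_1$-displacement is governed by $G_i$ (the paper phrases this as a velocity bound, you as an exact product using conservation of $G_i$ along its own flow). The only organisational difference is that the paper packages this as an induction on $k$ (tracking $F_k$) while you slice by $\mathrm{Re}\,z_1$ and push each slice through the full composition; since the supports of the $\sigma_i$ on the $\mathrm{Re}\,z_1$-axis are disjoint, the two viewpoints are equivalent.
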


Note that the union in the statement of the lemma is connected, because the $G_i$ can be chosen to have compact support.

\begin{proof} We will work by induction and show that $$\pi_1(F_k) \subset \left( [0,1] \times [0,\lambda] \right)  \bigcup_{i=1}^k \left( [2i-1,2i]\times \{0\} \right) \bigcup_{i=1}^k W_i \bigcup \left( [2k+1,2N+1] \times \{0\} \right)$$ and that the fibers are as described in the statement of the lemma over points $z_1$ with $\mathrm{Re}(z_1)\le 2k+1$. Furthermore, the fibers over $[2k+1,2N+1] \times \{0\}$ lie in $P_{k+1}$.

As noted at the end of Step $1$, this is the case for $F_0$ (where we have $\pi_1(F_0) \subset V$) so we assume that some $F_{k-1}$ has this property.

The diffeomorphism $\sigma_k$ is the identity on $\{\mathrm{Re}z_1 \le 2k\}$ (where $\chi_k =0$) and acts by $(z_1,z_2,z_3) \mapsto (z_1, \phi_k(z_2,z_3))$ on $\{\mathrm{Re}z_1 \ge 2k+1\}$. Therefore the fibers of $F_k$ over $[2k+1,2N+1] \times \{0\}$ lie in $P_{k+1}$ and it remains to check the image under $\sigma_k$ of points with $z_1=2k+t$ for some $0 \le t \le 1$.

Using the complex notation above, these points flow according to the time independent vector field $$X_k = i(\chi_k(\mathrm{Re}z_1) \nabla G_k(z_2,z_3) + \nabla \chi_k(\mathrm{Re}z_1) G_k(z_2,z_3)).$$ The flow preserves $\mathrm{Re}z_1$ and when $\mathrm{Re}z_1=2k+t$ is given by $X_k = i(\chi_k(2k+t) \nabla G_k(z_2,z_3) + \nabla \chi_k(2k+t) G_k(z_2,z_3))$. We have that $\chi_k(k+t)$ is roughly equal to $t$ and $\nabla \chi_k(k+t)$ is bounded by $1+\epsilon$. Therefore the component of the flow in the $z_1$-plane is parallel to the imaginary axis and has velocity bounded above and below by $\max G_k$ and $\min G_k$ respectively, up to terms of order $\epsilon$. The component in the $(z_2,z_3)$-plane is roughly $it\nabla G_k$ and since $G_k$ is time-independent the time-$1$ flow is equivalent to flowing by $i\nabla G_k$ for time $t$. By Lemma \ref{polymove} this has image in $D((1+t)(\lambda + \epsilon)) \times (A_k \cup A_{k+1})$ if $k$ is odd and  $D((2-t)(\lambda + \epsilon)) \times (A_k \cup A_{k+1})$ for $k$ even.
\end{proof}

{\bf Step $3$.} {\it Folding.}

Let $\tau:\CC \to \CC$ be a symplectic immersion of a neighborhood of $\pi_1(F_N)$ in the $z_1$-plane with the following properties. The immersion $\tau$ restricts to an embedding on the square $W_0 = [0,1] \times [0,\lambda]$, on each $W_i$ for $i \ge 1$, and on each interval $[2i-1,2i]\times \{0\}$. Moreover, we assume that the images of the intervals $[2i-1,2i]\times \{0\}$ are disjoint from the images of the $W_i$, and that the images of the $W_i$ for $i$ odd are disjoint from the images of the $W_i$ for $i$ even. As the $W_i$ have area roughly $\lambda$ (the norm of the $G_i$), we may assume that $\tau$ maps $\pi_1(F_N)$ to a neighborhood of $D(2\lambda)$. In fact, we will assume that $\tau$ maps the $W_i$ for $i$ even into a neighborhood of $D(\lambda)$ and the $W_i$ for $i$ odd close to $D(2\lambda) \setminus D(\lambda)$. Such an immersion is illustrated in Figure \ref{thinfig2}. Moreover, using \cite{schl}, Lemma $3.1.5$, we can further assume that points $z_1 \in W_i$ for $i$ odd with $\mathrm{Re}z_1=2i+t$ are mapped close to $D((2-t)\lambda)$. (Comparing with Lemma \ref{image}, this means the points whose fibers lie in the largest bidisks are mapped closer to the center of $D(2\lambda)$.)

\begin{figure}
 \centerline{\includegraphics{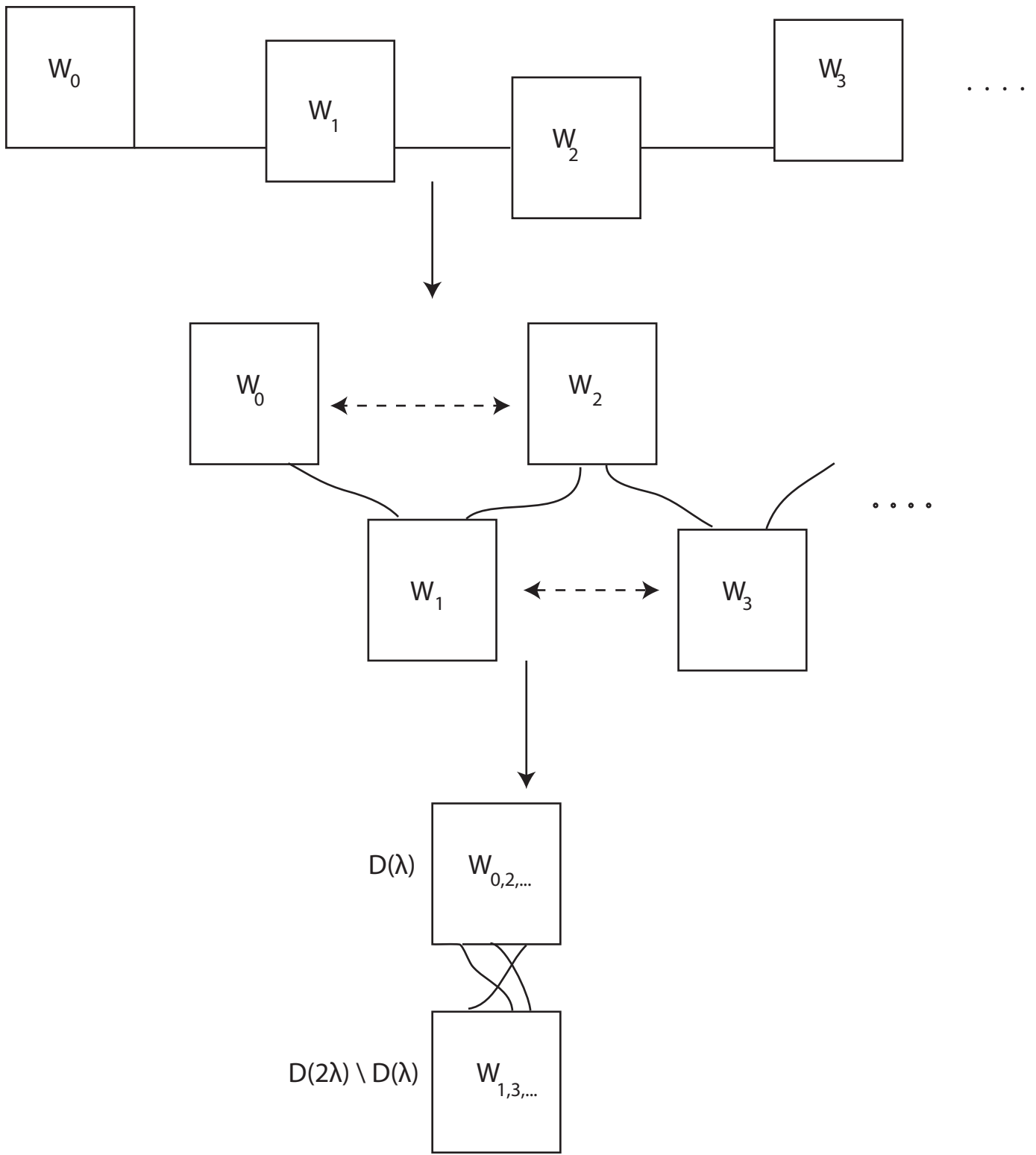}}
  \caption{The immersion $\tau$ of the $z_1$-plane.}
  \label{thinfig2}
\end{figure}

Finally consider the symplectic immersion $\phi=\tau \times \mathrm{id}$. We claim that this restricts to give an embedding of $F_N$ and furthermore satisfies $\phi(F_N) \subset (B^4(3\lambda) \cap P(2\lambda, 2\lambda)) \times \CC$. This will complete our construction.

Firstly, $\phi|_{F_N}$ is an embedding since by Lemma \ref{image}, if $i \neq j$ are either both even or both odd then the fibers of $F_N$ over $W_i$ and the fibers over $W_j$ are disjoint. Also, the fibers over the interval $[2i-1,2i]\times \{0\}$ lie in $P_i$ and so the fibers over different intervals are also disjoint.

It remains to check the image. Note that the coordinate $z_2 \in D(2\lambda)$ (or really a small neighborhood) for all points in our images. We use here that $\lambda \ge \frac{1}{2}$ to justify this for the images of points in the fibers over $W_0$. Therefore if $(z_1,z_2,z_3) \in \phi(F_N)$ and $z_1 \in D(\lambda)$ we have $(z_1,z_2) \in B^4(3\lambda) \cap P(2\lambda, 2\lambda)$ as required. Suppose then that $\pi|z_1|^2 = (1+t)\lambda$ for some $0 \le t \le 1$. Then by the construction of $\tau$ we may assume that $(z_1,z_2,z_3)$ is the image of a point $(z'_1,z_2,z_3)$ in a $W_i$ for $i$ odd with $\mathrm{Re}z'_1 \le 2i+1-t$. Hence by Lemma \ref{image} we have $z_2 \in D((2-t)\lambda) \subset D(2\lambda)$. Thus $\pi|z_1|^2 + \pi|z_2|^2 \le (1+t)\lambda + (2-t)\lambda = 3\lambda$ and again we see that $(z_1,z_2) \in B^4(3\lambda) \cap P(2\lambda, 2\lambda)$ as required.

\end{section}

\begin{section}{Embedding obstructions}\label{obstr}

In this section we show that our embeddings are optimal. This is an almost immediate consequence of Propositions $3.4$ and $3.14$ in \cite{hindker}, but to state the conclusion we need to recall some notation. The open ball $\mathring{B}^4(R)$ can be compactified by the complex projective plane $\CC P^2(R)$ equipped with a symplectic form where lines have area $R$. Then we study symplectic embeddings of an ellipsoid $$E \rightarrow \mathring{B}^4(R) \times \CC^{n-2} \subset \CC P^2(R) \times \CC^{n-2}.$$
Identifying $E$ with its image, the symplectic manifold $X = \left( \CC P^2(R) \times \CC^{n-2} \right) \setminus E$ admits a tame almost-complex structure, which, in a neighborhood of $\partial E$, is biholomorphic to $\partial E \times (-\infty, 0)$ with a translation invariant almost-complex structure $J$, see \cite{EGH} and \cite{hindker}, section $3.1$. Here $\partial E$ carries a contact form $\alpha$ coming from the standard Liouville form on $\CC^n$. This induces a contact structure $\xi=\{\alpha=0\}$ on $\partial E$ and a corresponding Reeb vector field $v$. Our almost-complex structure structure can be chosen so that $J$ preserves $\xi$ and $J(v)=-\frac{\partial}{\partial t}$, where $t$ is the coordinate on $(-\infty, 0)$. We denote by $\gamma$ the closed Reeb orbit $\{z_j=0, j \ge 2\} \cap \partial E$. Assuming $E=E(a_1, \dots ,a_n)$ with $a_j \ge a_1$ for all $j$, this is the orbit of shortest action. Let $r \gamma$ be the $r$ times cover of $\gamma$.

Given all of this, we can study finite energy $J$-holomorphic curves in $X$ asymptotic to closed Reeb orbits on $\partial E$. For the foundations of finite energy curves see \cite{hofa}, \cite{hofi}, \cite{hoff}.
The key existence result for finite energy curves which gives our embedding obstructions is the following, see \cite{hindker}, section $3.4$.

\begin{theorem}\label{fep} (Hind-Kerman, \cite{hindker}, Propositions $3.4$ and $3.14$) Let $E \subset \mathring{B}^4(R) \times \CC^{n-2}$ be the image of an ellipsoid $E(1,S_2, \dots ,S_n)$ with $S_j \ge 3d-1$ for all $j$. Then there exists a finite energy plane of degree $d$ in $X$ asymptotic to $(3d-1)\gamma$.
\end{theorem}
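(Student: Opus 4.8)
The plan is to obtain the plane as a component of a holomorphic building produced by stretching the neck along $\partial E$, starting from a positive genus-zero enumerative count for $\CC P^2$. First I would fix a tame almost-complex structure on $\CC P^2(R)\times\CC^{n-2}$ that has the cylindrical form described above in a neighbourhood of $\partial E$ (so that neck-stretching is defined) and agrees with the standard split structure outside a large compact set. The second condition confines the relevant curves: a finite-energy curve of bounded symplectic area whose puncture is asymptotic to a Reeb orbit on $\partial E$ cannot run off to infinity in the $\CC^{n-2}$ factor, since out there the projection to $\CC^{n-2}$ would be holomorphic and an unbounded holomorphic piece has infinite area. Hence one gets a uniform $C^0$ bound, and after compactifying the second factor (harmless given this confinement), Gromov--SFT compactness applies uniformly along the stretching family.

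For the input I would use that the genus-zero degree-$d$ count of rational curves in $\CC P^2$ through $3d-1$ generic points is positive (the Kontsevich number $N_d$; note $\dim_{\CC}\overline{\mathcal M}_{0,0}(\CC P^2,dL)=3d-1$, which is exactly why $3d-1$ constraints appear). I would impose an incidence condition of total weight $3d-1$ supported arbitrarily near the short Reeb orbit $\gamma$ and inside $E$ — concretely, $3d-1$ point constraints clustered along $\gamma$, or a single point of $\gamma$ together with a contact condition of order $3d-1$ to a local complex line there. Since the count is a deformation invariant, for each member of the stretching family the constrained moduli space is nonempty, and letting the neck length go to infinity the curves converge to a building with a top level in $X=(\CC P^2(R)\times\CC^{n-2})\setminus E$, middle levels in the symplectization $\RR\times\partial E$, and a bottom level in the completion $\hat E$ of $E$ (on which $c_1$ vanishes) carrying the incidence data.

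The core of the argument is to show that the top level is a single plane of degree $d$ asymptotic to $(3d-1)\gamma$. Positivity of intersection with a generic complex line forces the degree to split into non-negative parts among the top components, with the bottom level in $\hat E$ contributing degree $0$; a degree-$0$ top component would have negative symplectic area by Stokes' theorem (its area is $d'R$ minus the total action of its asymptotic orbits), so it cannot occur. The hypothesis $S_j\ge 3d-1$ for all $j$ enters decisively here: it guarantees that the $(3d-1)$-fold cover of $\gamma$ is non-degenerate with Conley--Zehnder index $n-1+2(3d-1)$ (no contribution from the longer semiaxes, since $3d-1\le S_j$), which is the value making the index count for the bottom-level curve carrying the weight-$(3d-1)$ incidence condition come out non-negative, and it simultaneously rules out any shorter competing orbit on a longer semiaxis that could otherwise absorb the asymptotics. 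Tracking the incidence data, concentrated along $\gamma$, then forces the bottom level to be asymptotic to precisely $(3d-1)\gamma$, so the top level is a single degree-$d$ component asymptotic to $(3d-1)\gamma$, which the index count identifies as a genus-zero curve with one puncture, i.e.\ a plane. Finally $\gcd(d,3d-1)=1$, so this plane is somewhere injective; thus a generic choice of $J$ makes it regular and excludes sphere-bubbling in $\CC P^2$ and multiply-covered degenerations in the building.

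The step I expect to be the main obstacle is precisely this last analysis — the combinatorial bookkeeping of the SFT building, together with the action and index inequalities, needed to eliminate every competing configuration of orbits and curves and isolate the degree-$d$ plane asymptotic to $(3d-1)\gamma$. This is the content of Propositions $3.4$ and $3.14$ of \cite{hindker}, and the hypothesis $S_j\ge 3d-1$ is exactly the arithmetic condition that renders the competing buildings inadmissible.
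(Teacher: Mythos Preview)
The paper does not give its own proof of this theorem: it is stated with attribution to Propositions $3.4$ and $3.14$ of \cite{hindker} and then immediately applied via the area inequality $dR-(3d-1)>0$. So there is nothing in the present paper to compare your argument against beyond the citation itself.

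That said, your outline is broadly in the spirit of the neck-stretching arguments in \cite{hindker}: one starts from a nonzero genus-zero Gromov--Witten-type count of degree-$d$ curves in $\CC P^2$ through $3d-1$ constraints, stretches the neck along $\partial E$, and uses action and Conley--Zehnder index bookkeeping (with the hypothesis $S_j\ge 3d-1$ controlling the indices of the short Reeb orbits) to force the top level of the limiting building to contain the desired degree-$d$ plane asymptotic to $(3d-1)\gamma$. Your identification of the main difficulty --- ruling out competing building configurations via the combinatorics of degree splitting, positivity of area, and index counts --- is accurate, and you correctly flag that this is precisely what Propositions $3.4$ and $3.14$ of \cite{hindker} carry out. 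One technical point worth sharpening: the confinement argument in the $\CC^{n-2}$ factor is more delicate than a bare area bound, since the curves are not a priori closed; in \cite{hindker} this is handled by a monotonicity or maximum-principle argument adapted to the split structure, and you should expect to invoke that rather than just ``infinite area''. But as a roadmap your proposal matches the cited source.
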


The degree of a finite energy curve in $X$ can be taken to be its intersection number with $\CC P^1(\infty) \times \CC^{n-2}$, where $\CC P^1(\infty)$ is the line at infinity in $\CC P^2(R)$ and the intersection number is just the number of intersections counted with multiplicity.

Finite energy curves have positive symplectic area. Since lines in $\CC P^2(R)$ have area $R$ and $\gamma$ has action $1$, a computation using Stokes' Theorem of the area of the planes given by Theorem \ref{fep} therefore implies $$dR - (3d-1) >0$$ or $$R > 3 - \frac{1}{d}.$$ If $a_2 = 3d-1$ then $\frac{3a_2}{a_2+1} = 3 - \frac{1}{d}$ and so this bound on $R$ implies that the corresponding embedding of Theorem \ref{main} is optimal.

There is a similar construction of finite energy planes into products which gives obstructions to embeddings of an ellipsoid $E$ into a product $\mathring{P}(R_1, R_2) \times \CC^{n-2}$. Now we compactify $\mathring{P}(R_1, R_2)$ to the product of spheres $S^2(R_1) \times S^2(R_2)$, where $S^2(R)$ denotes a sphere of area $R$. Setting $Y = \left( S^2(R_1) \times S^2(R_2) \times \CC^{n-2} \right) \setminus E$, we can find a tame almost-complex structure as before and study finite energy holomorphic curves in $Y$. The relevant existence result is then as follows.

\begin{theorem}\label{fep2} Let $E \subset \mathring{P}(R_1,R_2) \times \CC^{n-2}$ be the image of an ellipsoid $E(1,S_2, \dots ,S_n)$ with $S_j \ge 2d+1$ for all $j$ and $R_1 \le R_2$. Then there exists a finite energy plane of bidegree $(d,1)$ in $Y$ asymptotic to $(2d+1)\gamma$.
\end{theorem}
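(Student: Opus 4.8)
The plan is to run, essentially step for step, the argument behind Theorem \ref{fep} (\cite{hindker}, Propositions $3.4$ and $3.14$), making three substitutions: $\CC P^2(R)$ is replaced by $S^2(R_1)\times S^2(R_2)$, the class ``degree $d$'' is replaced by the bidegree $(d,1)$ class --- wrapping the smaller sphere $S^2(R_1)$ with multiplicity $d$ and the larger sphere $S^2(R_2)$ with multiplicity $1$, so that the relevant closed curves have symplectic area $dR_1+R_2$ --- and the asymptotic orbit $(3d-1)\gamma$ is replaced by $(2d+1)\gamma$. Here the bidegree of a finite energy curve in $Y$ means the pair of its intersection numbers with the divisors $S^2(R_1)\times\{pt\}\times\CC^{n-2}$ and $\{pt\}\times S^2(R_2)\times\CC^{n-2}$, the points chosen generic and hence off $E$, so that these divisors lie in $Y$; this is the evident analogue of the divisor $\CC P^1(\infty)\times\CC^{n-2}$ used to define the degree in the ball case. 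As in \cite{hindker}, the noncompactness of the $\CC^{n-2}$ factor is dealt with either by compactifying it to a product of spheres of area so large that no curve of the bounded symplectic area occurring below can reach the added divisors, or by checking directly that the curves in play stay in a fixed compact subset.

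The ingredients, in order. (I) The four-dimensional obstruction curves: where the ball case uses the McDuff--Schlenk exceptional curves in blow-ups of $\CC P^2$, here one uses the Frenkel--M\"{u}ller curves of \cite{frmu}, which for $a=2d+1$ are (blow-ups of) rational curves of bidegree $(d,1)$ in $S^2\times S^2$. That $2d+1$ point constraints make such a curve rigid is the four-dimensional shadow of the identity $c_1(d,1)-1=2d+1$, and the corresponding count is positive --- indeed equal to $1$, since a bidegree $(d,1)$ curve is the graph of a degree $d$ rational self-map of $\CC P^1$, a cleaner situation than the count $N_d$ in the $\CC P^2$ case. (II) The stabilization and neck-stretching machinery of \cite{hindker}, Section $3$, which uses no special feature of $\CC P^2$: take a sequence $J_k$ of tame almost-complex structures on $Y$ stretching the neck along $\partial E$, run SFT (Gromov--Hofer) compactness on the resulting stabilized curves, and obtain a holomorphic building with levels in the completed complement $Y$, in symplectizations $\RR\times\partial E$, and in the completed ellipsoid $\widehat E$. (III) The index and action bookkeeping pinning down the top level: the hypothesis $S_j\ge 2d+1$ forces the only closed Reeb orbits on $\partial E$ of action less than $2d+1$ to be the iterates $\gamma,2\gamma,\dots,2d\gamma$ of the short orbit, so every asymptotic appearing in the building is a cover of $\gamma$; a Stokes-theorem computation bounds the total multiplicity by the area budget $dR_1+R_2$, and it is exactly here that $R_1\le R_2$ enters, making the bidegree $(d,1)$ rather than $(1,d)$ the class of least area, hence the one the stretching produces and the one for which the budget is tight; finally, the Fredholm index of a finite energy plane of bidegree $(d,1)$ in $Y$ asymptotic to $k\gamma$ vanishes precisely when $k=2d+1$, which --- together with the matching conditions against the very restricted holomorphic curves of $\widehat E$, ruling out a multiply punctured top level --- identifies that level as a single plane asymptotic to $(2d+1)\gamma$.

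I expect the formal parts --- the neck-stretching, the compactness theorem, and the general mechanism by which the limit contains a finite energy plane mapping into $Y$ --- to transfer from \cite{hindker} without essential change, since that argument is already engineered to take a four-dimensional obstruction curve as input. The real work, and the main obstacle, is ingredient (III): re-deriving the Conley--Zehnder indices and the good/bad orbit classification for $\partial E$, together with the Fredholm-index formula for planes of bidegree $(d,1)$ in $Y$, with the numerology $2d+1$ in place of $3d-1$; and verifying that the presence of two rulings on $S^2\times S^2$ (in contrast to the single line class on $\CC P^2$) does not let the limiting building split off an extra component carrying the asymptotic away from the maximal cover $(2d+1)\gamma$. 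A subsidiary point to record carefully is the placement of the two divisors used to define the bidegree, so that they meet neither $E$ nor the curves in play.
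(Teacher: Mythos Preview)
Your plan is correct and is exactly what the paper intends: the paper gives no detailed proof of Theorem~\ref{fep2}, only the sentence ``There is a similar construction of finite energy planes into products\ldots'' and a reference back to the ball case (\cite{hindker}, Propositions~$3.4$ and~$3.14$); your proposal is a faithful unpacking of what ``similar'' means here --- replace $\CC P^2(R)$ by $S^2(R_1)\times S^2(R_2)$, the degree~$d$ class by the bidegree~$(d,1)$ class, and $(3d-1)\gamma$ by $(2d+1)\gamma$, then rerun the neck--stretching and index bookkeeping of \cite{hindker}.

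Two small remarks. First, your ordering of the divisors defining the bidegree is transposed relative to the paper's convention (the paper takes the \emph{first} entry to be the intersection number with $\{\infty\}\times S^2(R_2)\times\CC^{n-2}$, i.e.\ with a fiber of the projection to the first sphere); this is harmless but worth aligning so that ``bidegree $(d,1)$'' really has area $dR_1+R_2$. Second, in ingredient~(I) the input in \cite{hindker} is not literally a McDuff--Schlenk exceptional class in a blow-up but rather a closed genus-zero curve through the right number of point constraints (degree $d$ through $3d-1$ points in $\CC P^2$; here bidegree $(d,1)$ through $2d+1$ points in $S^2\times S^2$, with Gromov--Witten count $1$, as you note). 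The distinction is cosmetic for the purposes of the stretching argument, and your identification of the obstacle --- redoing the Conley--Zehnder and Fredholm index computations with $2d+1$ in place of $3d-1$, and checking that the two ruling classes on $S^2\times S^2$ do not allow an unwanted splitting --- is the right place to focus.
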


We say that the bidegree of a finite energy curve in $Y$ is $(k,l)$ if its intersection number with $\infty \times S^2(R_2) \times \CC^{n-2}$ is $k$ and its intersection number with $S^2(R_1) \times \infty \times \CC^{n-2}$ is $l$.

Now applying Stokes' Theorem to the planes in Theorem \ref{fep2} gives $$dR_1 + R_2 > 2d+1$$ and setting $R_1=R_2$ this shows that the embedding of Theorem \ref{main2} is optimal as claimed when $a_2 = 2d+1$.

\end{section}

\end{document}